\documentclass[10pt]{amsart}
\usepackage[utf8]{inputenc}
\usepackage{amsfonts}
\usepackage{hyperref}
\usepackage{amsmath}
\usepackage{amsthm}
\usepackage{mathrsfs}
\usepackage{tikz}
\usepackage{amssymb}
\usepackage{faktor}
\usepackage[all]{xy}

\newtheorem{thm}{Theorem}[section]
\newtheorem{cor}[thm]{Corollary}
\newtheorem{prop}[thm]{Proposition}
\newtheorem{lem}[thm]{Lemma}

\theoremstyle{definition}
\newtheorem{defn}[thm]{Definition}
\newtheorem{ex}[thm]{Example}
\newtheorem{exs}[thm]{Examples}

\theoremstyle{remark}
\newtheorem{rem}[thm]{Remark}

\newcommand{\R}{\mathbb{R}}
\newcommand{\CC}{\mathbb C}

\newcommand{\Z}{\mathbb Z}
\newcommand{\cS}{\mathcal{S}}
\newcommand{\CCC}{\mathcal{C}}
\newcommand{\A}{\mathcal{A}}
\newcommand{\AAA}{\mathfrak{A}}
\newcommand{\F}{\mathcal{F}}
\newcommand{\pr}{\operatorname{pr}}
\newcommand{\T}{\mathcal{T}}
\newcommand{\ev}{\operatorname{ev}}

\title[Toeplitz operators and symplectic groupoids]{Symplectic algebroids, groupoid Toeplitz operators and deformation quantization}
\author{Clement Cren, Jean-Marie Lescure, Omar Mohsen}
\begin{document}

\begin{abstract}
    We use Toeplitz operators to define a star-product on Poisson manifolds whose Poisson structure is induced by a symplectic Lie algebroid.
    The Toeplitz operators we consider are defined on groupoids whose algebroid can be endowed with a Heisenberg group structure on the fibers. This generalizes an approach due to Guillemin and Melrose in the symplectic case.
\end{abstract}

\maketitle

\section{Introduction}
Let  \((M,\{\cdot,\cdot\})\) be a Poisson manifold. A deformation quantization of the Poisson structure is an algebra structure on the vector space formal power series
\[\ast \colon \CCC^{\infty}(M)[[t]] \otimes \CCC^{\infty}(M)[[t]] \to \CCC^{\infty}(M)[[t]]\]
such that for all \(f,g \in \CCC^{\infty}(M)\)
\begin{align*}
     & f\ast g = fg    +O(t)    ,                       \\
     & f\ast g - g\ast f = t \lbrace f,g\rbrace + O(t).
\end{align*}
Kontsevich \cite{Kontsevich2003} proved algebraically that any Poisson manifold admits a deformation quantization. %He constructed an isomorphism between the formal deformations of Poisson structures (which a Poisson structure canonically induces) and the deformation quantizations of \(C^{\infty}(M)\) as above (up to equivalence on both sides). 
In the special case of symplectic manifolds, several constructions of deformation quantization exist. We are interested here in the construction of Guillemin for prequantizable symplectic manifolds \cite{Guillemin1995}, later generalized by Melrose to arbitrary symplectic manifolds \cite{Melrose2004}.
This construction uses the class of pseudodifferential operators on contact manifolds introduced in \cite{BoutetdeMonvelGuillemin1981}, also called Toeplitz operators. These operators are defined by compression of the pseudodifferential operators, using a certain projection, the so-called Szegö projection.
The main idea of Guillemin and Melrose is that the symbol algebra of Toeplitz operators is isomorphic to $C^\infty(M)[[t]]$. The star product then comes from composition of pseudo-differential operators.
%This allows to construct a star-product on the underlying symplectic manifold by considering the algebra of Toeplitz operators on a prequantization of it. The same construction can be performed on any contact manifold, and it yields a star-product on the leaf space of the Reeb foliation. In particular, if the Reeb foliation is simple, we get a star-product on the quotient manifold.
%This compression makes their symbolic calculus easier and the algebra of total symbol can be identified with formal power series in the algebra of smooth functions on the underlying contact manifold.

In this paper, we adapt this method to Poisson structures induced by a symplectic algebroid introduced by Nest and Tsygan \cite{NestTsygan2001}, recovering their construction of the star-product in an analytic way.

Our strategy is the following: using the symplectic form on the algebroid \(\A_s \to M\), we can construct another algebroid \(\A_c = \A_s \oplus \R \to M\) which is the contact analog of $\A_s$. The algebroid $\A_c$ is filtered and the corresponding grading is a bundle of Heisenberg Lie algebras (this is where the symplectic structure is crucial).
If the algebroid $\A_c$ is integrable, and \(G_c\) is a groupoid integrating \(\A_c\), we can construct pseudodifferential operators on \(G_c\) in the Heisenberg calculus. This calculus allows us to define a Szegö projector (at least formally, the operator itself does not need to be a projector).
This operator yields an algebra of Toeplitz operators, for which the symbol algebra is $C^\infty(M)[[t]]$. We then show that the composition of pseudodifferential operators gives a star product.

The algebroid \(\A_c\) does not need to be integrable in general. However, the previous construction can be done in any neighborhood of the unit and can thus be performed on a local groupoid, which always exists.

The paper is organized as follows. In Section \ref{Section: Symplectic algebroids} we introduce the geometric context, symplectic algebroids, and give examples of Poisson structures arising from them.
We also define the contact analog of these algebroids by means of a canonical central extension, which corresponds to a prequantization of the symplectic Lie algebroid. In Section \ref{Section: Groupoid integration} we explain the structure of the (local) groupoids integrating these algebroids. They happen to have a right-invariant symplectic (or contact) structure on their range fibers. Groupoids integrating the symplectic algebroid can thus be seen as a family of symplectic manifolds desingularising the initial Poisson structure.
Groupoids integrating the contact algebroid extension can then be seen as a family of contact manifolds prequantizing the previous family of symplectic ones. In Section \ref{Section: Groupoid Heisenberg calculus} we review some features of the Heisenberg calculus on groupoids whose algebroid is filtered by a corank one subbundle.
This calculus and its symbolic properties allow us in Section \ref{Section: Toeplitz operators} to construct a (formal) Szegö projector and study the properties of the corresponding Toeplitz operators.
In Section \ref{Section: Star product} we use these operators to construct a star product on Poisson manifolds whose Poisson structure is induced by a symplectic algebroid.

\section{Symplectic algebroids}\label{Section: Symplectic algebroids}
We refer the reader to \cite{Mackenzie2005} for the general theory of Lie algebroids and Lie groupoids.
\begin{defn}
    A Lie algebroid \(\A_s \to M\) is called symplectic if there exists a 2-form \(\omega \in \Gamma(M,\Lambda^2 \A^*)\) which is closed for the Lie algebroid differential and non-degenerate.
\end{defn}
Recall that $\omega$ being closed for the Lie algebroid differential means that for every \(X,Y,Z \in \Gamma(M,\A_s)\), we have
\begin{eqnarray*}
    &\rho(Z)(\omega(X,Y) )+\rho(Y)(\omega(Z,X) )+\rho(X)(\omega(Y,Z) )\\=&\omega([X,Y],Z) + \omega([Y,Z],X) + \omega([Z,X],Y),
\end{eqnarray*}
where \(\rho \colon \A_s \to TM\) is the anchor map.
A symplectic algebroid induces a Poisson structure on $M$ as follows: Let \(\omega^{\#} \colon \A_s^* \to \A_s\) be the inverse of the contraction map along \(\omega\).
We define a map \(\pi^{\#} \colon T^*M \to TM\) by the following diagram,

\[\xymatrix{\A_s \ar[r]^{\rho} & TM \\
    \A_s^* \ar[u]^{\omega^{\#}} & T^*M \ar[l]^{\rho^*} \ar[u]_{\pi^{\#}}}\]

The map \(\pi^{\#}\) is the contraction by a 2-tensor \(\pi \in \Gamma(M,\Lambda^2TM)\).

\begin{prop}\label{Proposition: Poisson structure from symplectic algebroid}
    The tensor \(\pi\) is a Poisson tensor. The induced symplectic leaves are contained in the leaves of the foliation induced by the Lie algebroid.
\end{prop}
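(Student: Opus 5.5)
Our plan is to transport the Poisson structure of the symplectic algebroid to $M$ through the anchor map. First, we note that $\pi$ is indeed a skew $2$-tensor. For $\xi,\eta\in T^*M$ we set $\pi(\xi,\eta)=\langle \eta,\pi^{\#}\xi\rangle=\langle \rho^*\eta,\omega^{\#}\rho^*\xi\rangle$. Skew-symmetry of $\omega$ makes $\omega^{\#}$ skew, so $\pi$ is skew.

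To prove the Jacobi identity, we introduce the algebroid Poisson bracket on functions. For $f\in\CCC^\infty(M)$, let $d_{\A}f=\rho^*df\in\Gamma(\A_s^*)$ be the algebroid differential and $X_f=\omega^{\#}(d_{\A}f)$ the algebroid Hamiltonian section. With the sign convention that $\iota_{X_f}\omega=d_\A f$, unwinding definitions gives
\[\{f,g\}=\pi(df,dg)=\omega(X_g,X_f)=\rho(X_f)g.\]
The key steps, in order, are as follows.
\begin{enumerate}
\item Show that $d_{\A}\iota_{X_f}\omega=d_\A d_\A f=0$. Cartan's formula for the algebroid differential, $L_X=\iota_X d_\A+d_\A\iota_X$, together with $d_\A\omega=0$, then gives $L_{X_f}\omega=0$.
\item Use the identity $\iota_{[X,Y]}=[L_X,\iota_Y]$, valid in the algebroid Cartan calculus. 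With $X=X_f$, $Y=X_g$ it gives $\iota_{[X_f,X_g]}\omega=L_{X_f}\iota_{X_g}\omega=L_{X_f}d_\A g=d_\A(\rho(X_f)g)=d_\A\{f,g\}$. Hence $[X_f,X_g]=X_{\{f,g\}}$.
\item Apply the anchor, which is a Lie algebra morphism: $\rho(X_{\{f,g\}})=[\rho(X_f),\rho(X_g)]$. Evaluating on $h$ gives $\{\{f,g\},h\}=\{f,\{g,h\}\}-\{g,\{f,h\}\}$, which is the Jacobi identity.
\end{enumerate}
Equivalently, one may verify the closedness identity stated above directly on the three sections $X_f,X_g,X_h$. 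In that case the left side gives three double brackets and the right side gives three brackets of Hamiltonian sections. I expect the main obstacle to be bookkeeping of signs. Concretely, one has to check that the convention for $\omega^{\#}$ indeed yields $\rho(X_f)g=\pm\{f,g\}$ consistently. I would fix conventions once in step (1) and check them on the case $\A_s=TM$ with an ordinary symplectic form.

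For the leaves, the image of $\pi^{\#}=\rho\circ\omega^{\#}\circ\rho^*$ is contained in $\rho(\A_s)$. The characteristic distribution of $\pi$ is therefore pointwise a subspace of the (singular) distribution $\rho(\A_s)$. Both distributions integrate to the symplectic foliation and the algebroid orbit foliation respectively. A symplectic leaf $L$ is connected, and every Hamiltonian vector field $\pi^{\#}df=\rho(X_f)$ is tangent to the algebroid orbits. Since $L$ is swept out by piecewise flows of Hamiltonian vector fields, each such flow stays in a single orbit. Hence $L$ lies in one algebroid leaf.
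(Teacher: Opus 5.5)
Your proof is correct, but for the Jacobi identity it takes a different route from the paper.

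The paper works with multivectors. It takes the bivector $\pi_0\in\Gamma(M,\Lambda^2\A_s)$ inverse to $\omega$ and asserts that closedness of $\omega$ gives $[\pi_0,\pi_0]=0$. It then uses that the anchor extends to a morphism of Schouten brackets $\Gamma(M,\Lambda^\bullet\A_s)\to\Gamma(M,\Lambda^\bullet TM)$, so that $[\pi,\pi]=\rho([\pi_0,\pi_0])=0$.

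You stay in degree one. The algebroid Cartan identities give $[X_f,X_g]=X_{\{f,g\}}$, and afterwards you only need that $\rho$ is a bracket morphism on sections.

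Your version is more elementary and self-contained. It avoids both facts the paper uses without proof: the algebroid analogue of ``$\omega$ closed $\Leftrightarrow$ $[\pi_0,\pi_0]=0$'', and the Schouten compatibility of $\rho$. The paper's version is a two-line argument, and it yields the stronger intermediate statement that $\pi_0$ is a Poisson bivector on $\A_s$ itself. The paper reuses exactly that for its Lie bialgebroid and Poisson groupoid statement.

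Your bracket $\{f,g\}=\rho(X_f)g$ is the negative of the convention the paper uses later ($\{f,g\}=-\rho_s(X_f)g$). This is harmless for the Poisson property, and your conventions are internally consistent.

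For the leaves, your argument is the paper's observation $\pi^{\#}(T^*M)\subset\rho(\A_s)$ with the implicit step spelled out. Each Hamiltonian vector field is $\rho(X_f)$ with $X_f\in\Gamma(M,\A_s)$. Hence piecewise Hamiltonian flows are piecewise flows of the family generating the algebroid orbits, and this is what turns the pointwise inclusion of distributions into an inclusion of leaves.
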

\begin{proof}

    We need to check that \([\pi,\pi] = 0\). Let \(\pi_0 \in \Gamma(M,\Lambda^2\A_s)\) be the two tensor such that the contraction by \(\pi_0\) is the map \(\omega^{\#}\). This tensor satisfies \([\pi_0,\pi_0] = 0\). The map \(\rho \colon \A \to TM\) preserves the Lie algebra structure on sections thus so does its extension \(\Gamma(M, \Lambda^{\bullet}\A_s) \to \Gamma(M,\Lambda^{\bullet}TM)\). Since \(\pi = \rho(\pi_0)\) we get \([\pi,\pi] = \rho([\pi_0,\pi_0]) = 0\).

    The tangent space of the symplectic leaves are given by \(\pi^{\#}(T^*M)\). The tangent space of the Lie algebroid foliation is given by \(\rho(\A_s)\). From the diagram we have \(\pi^{\#}(T^*M) \subset \rho(\A_s)\), hence the inclusion of leaves.
\end{proof}

Let \((M,\pi)\) be a Poisson manifold. We get a Lie algebroid \(T^*M \to M\) for which the Poisson tensor becomes a 2-form \(\pi \in \Lambda^2(T^*M)^*\). It is however non-degenerate if and only if \(M\) is symplectic.

\begin{exs}
    \begin{enumerate}
        \item         The tangent Lie algebroid \(TM\) of a manifold \(M\) is symplectic if and only if \(M\) is a symplectic manifold.
        \item     A Lie algebra \(\mathfrak{g}\) is symplectic if and only if it is a quasi-Frobenius Lie algebra. This is a particular case of triangular Lie bialgebra where the \(r\)-matrix is invertible (its inverse will be the symplectic structure). These structures are tied to Yang-Baxter equations and quantum deformations, see e.g. \cite{ChariPressley1994}.
        \item     A \(E\)-symplectic manifold is a manifold \(M\) with an involutive sub-module of vector field \(E \subset \mathfrak{X}(M)\) which is locally free, and such that the corresponding Lie algebroid \(\!^ETM\) (whose sections are elements of \(E\)) is symplectic. Such manifolds have received a lot of attention recently (most notably \(b\)-symplectic manifolds) \cite{MirandaScott2021,GuilleminMirandaPires2014,MirandaOms2023}.
    \end{enumerate}
\end{exs}

We specialize the previous class of examples into several ones.

\begin{ex}[b-symplectic]
    Let \(M\) be a smooth manifold, \(Z = f^{-1}(0)\subset M\) a hypersurface. A b-symplectic structure is a Poisson structure which in coordinates \((f,x_2,\cdots,x_{2n})\) takes the form:
    \[\pi = f\frac{\partial}{\partial f}\wedge \frac{\partial}{\partial x_{n+1}} + \sum_{j = 2}^{n}\frac{\partial}{\partial x_{j}}\wedge \frac{\partial}{\partial x_{n+j}} .\]
    The symplectic leaves of this structure are of two kind: connected components of \(M\setminus Z\) and the hypersurfaces in \(Z\) given by level set of \(x_{n+1}\) in the coordinates above.
    This Poisson structure is induced by a symplectic structure on the \(b\)-algebroid, whose sections are vector fields tangent to the hypersurface \(Z\). This algebroid is locally generated by the following vector fields \(f\frac{\partial}{\partial f}, \frac{\partial}{\partial x_{j}}, j\geq 2\). The symplectic structure takes the form:
    \[\omega = \frac{\mathrm{d}f}{f}\wedge \mathrm{d}x_{n+1} + \sum_{j = 2}^n \mathrm{d}x_j\wedge \mathrm{d} x_{n+j},\]
    with \(\frac{\mathrm{d}f}{f}\) being a well defined section of the dual of the \(b\)-algebroid.
\end{ex}

\begin{ex}[0-symplectic]
    We keep the same notations as previously, this time the Poisson structure has the form
    \[\pi = f^2\frac{\partial}{\partial f}\wedge \frac{\partial}{\partial x_{n+1}} + f^2\sum_{j = 2}^{n}\frac{\partial}{\partial x_{j}}\wedge \frac{\partial}{\partial x_{n+j}} .\]
    The leaves are given by connected components of \(M\setminus Z\) and points in \(Z\).
    This structure is induced by a symplectic structure on the \(0\)-algebroid, whose sections are vector fields vanishing on \(Z\). It is generated by vector fields of the form \(fX, X \in \mathfrak{X}(M)\).
    The symplectic structure takes the form:
    \[\omega = \frac{1}{f^2}\left( \mathrm{d}f \wedge \mathrm{d}x_{n+1}+ \sum_{j = 2}^n \mathrm{d}x_j\wedge \mathrm{d} x_{n+j}\right).\]
\end{ex}

These examples showcase a symplectic structure on an open dense subset, which degenerates on its boundary in various ways, leading to different Poisson structures. We can also have foliated behaviors on the interior.

\begin{ex}[Regular symplectic foliations]
    If the symplectic foliation is regular, the algebroid is given by the tangent space to the leaves. The Poisson structure then induces a symplectic structure on the algebroid in a tautological way. Toeplitz operators have been considered in that context (when the foliation is a fibration) in \cite{CrenRezaei2026}.
\end{ex}

These examples can also be combined, for instance one could consider a foliated manifold, whose leaves are b-symplectic. The b-tangent bundles of the leaves would then induce a Lie algebroid on the manifold which is also symplectic...

\begin{rem}
    On a \(E\)-symplectic manifold, the anchor map is injective over an open dense subset \(M_{reg}\). Over this subset \(M_{reg}\) the image of \(\pi^{\#}\) then coincides with the one of \(\rho\) and the symplectic leaves are exactly the ones of the foliation induced by the Lie algebroid (this foliation also happens to be regular over \(M_{reg}\)).

    On the whole manifold however, the two foliations may differ. In the \(b\)-symplectic case for instance, both foliations will have an open dense leaf (which is the regular part as above), but they differ on the hypersurface. The algebroid foliation only has one other leaf which is the whole hypersurface. On the other hand, the symplectic leaves form a foliation of codimension 1 of the boundary.

    The class of Poisson structures induced by a symplectic algebroid is also not well established in the literature. There are however examples of Poisson structures that are induced by symplectic algebroids but that are not \(E\)-symplectic.

    An example of Poisson structure that cannot be \(E\)-symplectic is the following. Consider the open unit ball \(B \subset\R^{2n}\). Let \(\pi_{st}\) be the Poisson structure corresponding to the standard symplectic structure on \(\R^{2n}\). Let \(\varphi \colon \R^{2n} \to \R_{\geq 0}\) be a function which is positive on \(B\) and vanishes on its complement. The tensor \(\pi := \varphi\pi_{st}\) defines a new Poisson structure on \(\R^{2n}\). Its symplectic leaves consist of the open leaf \(B\) and all the singletons \(\{x\}\) with \(x \in \R^{2n}\setminus B\). Consider the algebroid \(\A_s \to \R^{2n}\) whose sections are generated by elements \(X_1,\cdots,X_{2n}\), and whose anchor map is \(X_j \mapsto \varphi\frac{\partial}{\partial x_j}\). It is a symplectic Lie algebroid for the symplectic form \(\omega := \sum_{j = 1}^n X_j^*\wedge X_{n+j}^*\). The Poisson structure induced by \(\A_s\) is clearly \(\pi\). It can however not be induced by an \(E\)-symplectic structure. Indeed, the rank of the Poisson structure would then have to be maximal on an open dense subset (where the algebroid is injective) which is not the case here.
\end{rem}

Let \((\A_s,\omega)\) be a symplectic algebroid. The 2-form \(\omega\) is an algebroid 2-cocycle, we can thus construct the following central extension:
\[\xymatrix{0 \ar[r] & \R \ar[r] & \A_c \ar[r] & \A_s \ar[r] & 0.}\]
As a vector bundle, \(\A_c := \A_s \oplus \R\) and the Lie bracket has the following form:
\begin{equation}\label{Equation: Central extension Lie algebroid}
    \forall X,Y \in \Gamma(M,\A_s), \forall f,g\in \CCC^{\infty}(M), [(X,f),(Y,g)] = ([X,Y],\omega(X,Y)).
\end{equation}

\section{Integration of symplectic algebroids and their extensions}\label{Section: Groupoid integration}

We keep the notations of the previous section. We denote by \((\A_s, \omega)\) a symplectic Lie algebroid and by \(\A_c\) the corresponding Lie algebroid extension by the 2-cocycle \(\omega\).

Consider the one form \(\theta \in \Omega^1(\A_c)\) given by
\[\forall (X,t) \in A_c, \quad \theta(X,t) = - t.\]
This one form has kernel \(\A_s\) and using Equation \ref{Equation: Central extension Lie algebroid}, its differential is the pullback of \(\omega\) to \(\A_c\). This form should be seen as a contact form on the algebroid \(\A_c\), and the central extension as a prequantization of the symplectic structure \((\A_s, \omega)\) to the contact structure \((\A_c,\theta)\). We make sense of these intuition by integrating the algebroids \(\A_s\) and \(\A_c\).

Assume for now that the algebroids \(\A_s\) and \(\A_c\) are integrable. Denote by \(G_s\rightrightarrows M\) and \(G_c\rightrightarrows M\) the \(s\)-connected, \(s\)-simply connected Lie groupoids integrating \(\A_s\) and \(\A_c\) respectively. The forms \(\omega\) and \(\theta\) induce right-invariant forms on the groupoids \(G_s\) and  \(G_c\), which we will denote by \(\Omega\) and \(\Theta\) respectively.

\begin{prop}\label{Proposition: Groupoid structure fibers}
    For each \(x \in M\), the range-fibers \((G_s)^x\) and \((G_c)^x\) are symplectic and contact manifolds for the restrictions of the form \(\Omega\) and \(\Theta\) respectively.
\end{prop}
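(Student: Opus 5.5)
The plan is to reduce both statements to pointwise linear algebra together with a compatibility between the algebroid differential and the de Rham differential on range fibers.

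\textbf{Setup.} Recall that a section \(\alpha\in\Gamma(M,\Lambda^k\A^*)\) of a Lie algebroid \(\A\) integrated by \(G\) defines a right-invariant \(k\)-form on each range fiber. Using the convention that right-invariant vector fields are tangent to the range fibers, we have \(T_g G^x \cong \A_{s(g)}\) via right translation \(R_{g^{-1}}\colon G^x \to G^{s(g)}\), and we set \(\alpha_g := (R_{g^{-1}})^*\alpha_{s(g)}\). (If the paper's convention uses source fibers instead, the same argument applies with the roles of \(r\) and \(s\) swapped.) The key fact is that, restricted to a range fiber \(G^x\), the family of right-invariant forms is a dg-morphism
\[
\Gamma(\Lambda^\bullet\A^*) \to \Omega^\bullet(G^x),
\]
that is, \(d_{dR}(\alpha|_{G^x}) = (d_\A\alpha)|_{G^x}\). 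To check this, evaluate both sides on right-invariant vector fields \(\overrightarrow{X}\). These span \(TG^x\) pointwise and satisfy \([\overrightarrow{X},\overrightarrow{Y}]=\overrightarrow{[X,Y]}\). Moreover \(\overrightarrow{X}\) acts on functions of the form \(f\circ s\) by \((\rho(X)f)\circ s\), and \(\alpha(\overrightarrow{X_1},\dots)=\alpha(X_1,\dots)\circ s\). Substituting these into the Cartan formula for \(d_{dR}\) reproduces exactly the algebroid differential. The restriction of \(s\) to \(G^x\) is then used as the map along which functions are pulled back.

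\textbf{Symplectic case.} Applying this to \(\omega\) gives \(d\Omega|_{G_s^x} = (d_{\A_s}\omega)|_{G_s^x}=0\), so \(\Omega\) is closed on each range fiber. Nondegeneracy is pointwise: at \(g\), the form \(\Omega_g\) is the pullback of \(\omega_{s(g)}\) under the linear isomorphism \(T_gG_s^x\cong (\A_s)_{s(g)}\), and \(\omega_{s(g)}\) is nondegenerate. Hence \((G_s^x,\Omega)\) is symplectic. In particular \(\dim G_s^x=\operatorname{rk}\A_s\) is even.

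\textbf{Contact case.} We first note \(d_{\A_c}\theta = -\pi^*\omega\), up to the sign convention in the paper, where \(\pi\colon\A_c\to\A_s\) is the projection. This is the computation already indicated after Equation \eqref{Equation: Central extension Lie algebroid}. Since \(\theta(X,f)=-f\), we have
\[
d\theta((X,f),(Y,g)) = \rho(X)(-g)-\rho(Y)(-f)+\omega(X,Y).
\]
The derivative terms are not central, so the computation needs care; with the paper's convention for the anchor of \(\A_c\) (the anchor of \((X,f)\) being \(\rho(X)\)), one should get \(d\theta = \pi^*\omega\) restricted appropriately. The precise form does not matter much: what is needed is that, on \(\ker\theta = \A_s\oplus 0\), the form \(d\theta\) equals \(\pm\omega\). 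This is immediate because \(f=g=0\) there.

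The compatibility above then gives that, on \(T_gG_c^x\cong(\A_c)_{s(g)}\), the forms \(\Theta_g\) and \(d\Theta_g\) correspond to \(\theta\) and \(d_{\A_c}\theta\) at \(s(g)\). With \(\operatorname{rk}\A_c=2n+1\), the contact condition \(\Theta\wedge(d\Theta)^n\neq0\) is pointwise. It holds because \(\theta\neq0\) and \(d\theta\) restricted to \(\ker\theta=(\A_s)_{s(g)}\) is \(\pm\omega\), which is nondegenerate. Hence \(\Theta|_{G_c^x}\) is a contact form.

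\textbf{Main obstacle.} The main obstacle is the compatibility lemma \(d_{dR}\circ\mathrm{res}=\mathrm{res}\circ d_\A\), in particular getting conventions right: right- versus left-invariance, which fiber the invariant fields are tangent to, and the fact that invariant vector fields act on functions pulled back by \(s\) through the anchor. I would prove it using the Cartan formula on right-invariant frames, which suffices because these frames span the tangent space pointwise.
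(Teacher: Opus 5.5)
Your proposal is correct and takes the same route as the paper. Nondegeneracy comes from the pointwise identification of \(T_\gamma G^x\) with a fiber of the algebroid, and closedness (and, for \(G_c\), the contact condition) comes from the fact that restricting right-invariant forms to a range fiber intertwines the algebroid differential with the de Rham differential. You fill in details the paper leaves implicit: the Cartan-formula proof of that compatibility on right-invariant frames, the precise identification \(T_\gamma G^x\cong \A_{s(\gamma)}\), and the check that \(d_{\A_c}\theta\) restricts to \(\pm\omega\) on \(\ker\theta\), obtained by evaluating this tensor on sections with vanishing \(\R\)-component.
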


\begin{proof}
    Let \(\gamma \in G_s\) with \(r(\gamma) = x\). We have \(T_{\gamma}(G_s)^x = \ker(\mathrm{d}r)_{\gamma}\), hence \(T_{\gamma}(G_s)_x \cong R_{\gamma}^*(\A_s)_x\). This linear isomorphism is a symplectomorphism for the restriction of \(\Omega_{\gamma}\) and \(\omega_x\) respectively. In particular the restriction of \(\Omega\) to the range fibers is non-degenerate. Now the restriction of \(\mathrm{d}\Omega\) to a range fiber is the right-invariant form corresponding to \(\mathrm{d}\omega = 0\) on the algebroid. Therefore \((G_s)^x\) is a symplectic manifold for the restriction of \(\Omega\). A similar reasoning holds for the contact case.
\end{proof}

The structure of the groupoid \(G_s\) was also identified in \cite{Pham2021}, with a more systematic approach on groupoids integrating symplectic algebroids. Such groupoids are then called \(r\)-symplectic Lie groupoids (\(r\) being the range map). In the same fashion, groupoid integrating the central extension of a symplectic algebroid could be called \(r\)-contact Lie groupoids.

\begin{rem}
    Let \(H \subset \ker(\mathrm{d}r) \subset TG_c\) be the right-invariant subbundle corresponding to \(\A_s \subset \A_c\). Let \(R\) be the right invariant vector field on \(G_c\) corresponding to the section
    \[x \mapsto (x,(0_{\A_s}, 1))\]
    of the algebroid \(\A_c\). The form \(\Theta\) vanishes on \(H\) and \(\Theta(R) = 1\). Therefore \(H\) restricts on each fiber to the contact distribution and \(R\) to the Reeb vector field.
\end{rem}

\begin{prop}
    The groupoid \(G_s \rightrightarrows M\) is a Poisson groupoid. In particular, the source map is a Poisson map and the target is anti-Poisson.
\end{prop}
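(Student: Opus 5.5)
We need a Poisson structure on \(G_s\) that is multiplicative, with source Poisson and range anti-Poisson. The natural candidate is built from the right-invariant symplectic forms on the range fibers from Proposition \ref{Proposition: Groupoid structure fibers}, together with their left-invariant counterparts on the source fibers.

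\textbf{Candidate and first checks.} By Proposition \ref{Proposition: Groupoid structure fibers}, the range fibers \((G_s)^x\) form a foliation of \(G_s\) whose leaves carry the symplectic forms \(\Omega|_{(G_s)^x}\). I would first use this alone to define \(\Pi_r\), the regular Poisson structure whose symplectic leaves are the connected components of the range fibers. It is Poisson because it is a foliated family of symplectic forms, with \(\mathrm{d}\Omega = 0\) leafwise. Concretely, \(\Pi_r\) is the right-invariant bivector \(\overrightarrow{\pi_0}\), obtained by right-translating \(\pi_0 \in \Gamma(\Lambda^2 \A_s)\) (the inverse of \(\omega\), as in the proof of Proposition \ref{Proposition: Poisson structure from symplectic algebroid}). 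Let \(\overleftarrow{\pi_0}\) denote the left-invariant bivector obtained the same way. Since right- and left-invariant vector fields commute, \([\overrightarrow{\pi_0},\overleftarrow{\pi_0}]=0\). Also \([\overrightarrow{\pi_0},\overrightarrow{\pi_0}]\) is the right translate of \([\pi_0,\pi_0]=0\), and similarly for the left translates.

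\textbf{Definition of the Poisson structure.} I would set \(\Pi := \overrightarrow{\pi_0} - \overleftarrow{\pi_0}\), the standard form of a multiplicative bivector with coboundary \(\pi_0\) (here \(\pi_0\) is an ``\(r\)-matrix'' with \([\pi_0,\pi_0]=0\)). Up to the sign convention relating left translation to the inversion \(i\), this equals \(\overrightarrow{\pi_0}+i_*\overrightarrow{\pi_0}\). Then
\[
[\Pi,\Pi]=[\overrightarrow{\pi_0},\overrightarrow{\pi_0}]-2[\overrightarrow{\pi_0},\overleftarrow{\pi_0}]+[\overleftarrow{\pi_0},\overleftarrow{\pi_0}]=0,
\]
so \(\Pi\) is Poisson. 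It is multiplicative because any bivector of the form \(\overrightarrow{P}-\overleftarrow{P}\) is: its value at a product \(gh\) decomposes as \(L_g\Pi_h + R_h\Pi_g\). This is a direct check from the definition of right- and left-invariant extensions.

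\textbf{Properties of the source and range maps.} Right-invariant vector fields are tangent to the range fibers and are \(s\)-projectable, with \(s_*\overrightarrow{X}=\rho(X)\) up to sign convention. Left-invariant fields are tangent to the source fibers, so \(s_*\overleftarrow{\pi_0}=0\). Hence \(s_*\Pi = \rho(\pi_0)=\pi\), so \(s\) is Poisson. Symmetrically, \(r_*\overrightarrow{\pi_0}=0\) and \(r_*\Pi = -\rho(\pi_0)=-\pi\), so \(r\) is anti-Poisson. Alternatively, once multiplicativity is known, the general theory of Poisson groupoids gives these properties, with the induced base structure identified through the anchor as in Proposition \ref{Proposition: Poisson structure from symplectic algebroid}.

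\textbf{Main obstacle.} The main difficulty is bookkeeping of sign and convention choices. Whether right-invariant fields project to \(\rho(X)\) under \(s\) or under \(r\), and the sign in \(i_*\overrightarrow{X}=-\overleftarrow{X}\), together determine which of \(s,r\) is Poisson. I would fix the conventions of \cite{Mackenzie2005} and verify the projections on a single right-invariant field before assembling the argument. If needed, I would replace \(\Pi\) by \(-\Pi\) to match the stated orientation.
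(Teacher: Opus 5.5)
Your route differs from the paper's. The paper only notes that $\mathrm{d}\omega=0$ is equivalent to $[\pi_0,\pi_0]=0$, so $(\A_s,\A_s^*)$ is a (triangular) Lie bialgebroid, and then cites the Mackenzie--Xu/Liu--Xu integration theorem. You instead write the bivector down explicitly. Your proof that $\Pi=\overrightarrow{\pi_0}-\overleftarrow{\pi_0}$ is Poisson is fine. For $[\overrightarrow{\pi_0},\overleftarrow{\pi_0}]=0$ you should add that each family of invariant fields also kills the other family's coefficient functions, since those are pulled back along the map to whose fibres the first family is tangent. Your computation of $s_*\Pi$ and $r_*\Pi$ is also fine.

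\textbf{The gap is multiplicativity}, which is exactly what makes $G_s$ a Poisson groupoid rather than a groupoid carrying some Poisson structure. The identity $\Pi_{gh}=L_g\Pi_h+R_h\Pi_g$ is the Poisson-group condition. On a groupoid, $L_g$ and $R_h$ act only on vectors tangent to one family of fibres. If you extend them by local bisections and take $g=h=x\in M$, you get $\Pi_x=2\Pi_x$, i.e.\ $\Pi|_M=0$. That fails for your $\Pi$: on the pair groupoid of a symplectic manifold it is $\pi\oplus(-\pi)$, which is nonzero on the diagonal. The correct bisection identity has the extra term $-(L_{\mathcal{X}}R_{\mathcal{Y}})_*\Pi_x$. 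But every right-invariant and every left-invariant bivector satisfies that identity, hence so does $\overrightarrow{\pi_0}+\overleftarrow{\pi_0}$, so on its own it proves nothing.

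This is tied to your claim that the relative sign is a convention, which is not correct. Build $\overleftarrow{\pi_0}$ from left-invariant vector fields; whatever sign convention you use for those squares away on bivectors. Hence $\overleftarrow{\pi_0}=i_*\overrightarrow{\pi_0}$ and $\Pi=\overrightarrow{\pi_0}-i_*\overrightarrow{\pi_0}$, which satisfies $i_*\Pi=-\Pi$ as it must.

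By contrast, $\overrightarrow{\pi_0}+i_*\overrightarrow{\pi_0}$ is Poisson, and both $s$ and $r$ are Poisson maps for it, but it is not multiplicative. On the pair groupoid it is $\pi\oplus\pi$, for which the diagonal (the unit space) is not coisotropic. Only the overall sign of $\Pi$ is free. Coisotropy of the units is what detects the relative sign: along $M$ one has $\mathrm{d}i=\rho-\mathrm{id}$ on $\A_s$, so for covectors $\xi,\eta$ conormal to $M$,
\[
\Pi(\xi,\eta)=\pi_0(\xi|_{\A_s},\eta|_{\A_s})-\pi_0(\xi|_{\A_s},\eta|_{\A_s})=0 .
\]

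To close the gap, do one of two things:
\begin{itemize}
\item check directly that the graph of multiplication is coisotropic in $G_s\times G_s\times\overline{G_s}$;
\item invoke the known fact that $\overrightarrow{\Lambda}-\overleftarrow{\Lambda}$ is multiplicative for a triangular Lie bialgebroid with $r$-matrix $\Lambda$, which is essentially the citation the paper uses.
\end{itemize}
Once repaired, your explicit route has a real advantage. It needs neither $s$-simple connectedness nor the integration theorem, and it applies verbatim to local groupoids.
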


\begin{proof}
    Since \(\omega\) is non-degenerate, we can define \(\pi \in \Gamma(\Lambda^2\A)\) so that contraction by \(\pi\) is the inverse of contraction by \(\omega\). The identity \(\mathrm{d}\omega = 0\) then translates to \([\pi,\pi] = 0\). The pair \((\A_s,\A_s^*)\) is then a Lie bialgebroid and the groupoid \(G_s\) is a Poisson groupoid \cite{MackenzieXu1994,LiuXu1996}.
\end{proof}

This proposition is the reason why we put the symplectic (and contact) structure on the range fibers. Indeed, for \(x \in M\), we get a Poisson map \(s \colon (G_s)^x \to G_s \cdot x\) where \(G_s \cdot x\) is the orbit of the point \(x\) under the groupoid action. We identified the orbit to a Poisson manifold in Proposition \ref{Proposition: Poisson structure from symplectic algebroid} and the range fiber to a symplectic manifold in the previous proposition. The groupoid \(G_s\) can thus be seen as a family of symplectic realizations for the orbits it induces on the base.

In the next example, we relate the roles of the groupoids \(G_s\) and \(G_c\) to the objects used when prequantizing symplectic manifolds.

\begin{ex}
    Let \((M,\omega)\) be a (connected, simply connected) symplectic manifold. The algebroid \(TM\) is then symplectic. It is integrated by the pair groupoid \(M \times M \rightrightarrows M\) and we have \(\Omega = \pr_1^*\omega - \pr_2^*\omega\). Assume \(M\) is prequantizable, this means there exists a principal \(\mathbb{S}^1\)-bundle \(p \colon X \to M\) with a connection \(\theta\) satisfying \(\mathrm{d}\theta = p^*\omega\). The manifold \(X\) then has the structure of a contact manifold with one form \(\theta\). We can form the gauge groupoid \(G = (X \times X)/\mathbb{S}^1 \rightrightarrows M\). This groupoid sits in the central extension
    \[ M \times \mathbb{S}^1 \hookrightarrow G \twoheadrightarrow M \times M.\]
    At the level of Lie algebroids, this extension becomes the central extension of the symplectic algebroids \(TM\).
\end{ex}

\begin{rem}
    Notice in the previous example that the gauge groupoid is not source simply connected so it would not be the groupoid \(G_c\) but a quotient of it. In general one can define the so called periods groupoid \(\mathrm{Per}(\omega) \subset M \times \R\). The groupoid generalizing the gauge groupoid of the previous example is the quotient of \(G_c\) by the image of the periods under the map \(M\times \R \to G_c\). If we write \(\widetilde{G}_c\) for this new groupoid, we obtain the central extension of Lie groupoids
    \[ M \times \mathbb{S}^1 \hookrightarrow \widetilde{G}_c \twoheadrightarrow G_s,\]
    instead of the one of \(G_c\):
    \[ M \times \R \hookrightarrow G_c \twoheadrightarrow G_s.\]
    In light of Proposition \ref{Proposition: Groupoid structure fibers}, the map \(\widetilde{G}_c \twoheadrightarrow G_s\) then restricts on each range fiber to the \(\mathbb{S}^1\)-principal bundle map prequantizing the symplectic manifold \((G_s)^x\). This philosophy of applying the quantization procedure to a groupoid originates in \cite{WeinsteinXu1991} for symplectic groupoids. Here the symplectic structure is only on the range fibers but the idea is similar.

    In this paper, we are only interested in the neighborhood of the units in \(G_c\) so considering \(G_c\) or \(\widetilde{G}_c\) doesn't matter. Note that in general, algebroids need not being integrable \cite{CrainicFernandes2003}. We can however integrate them by local groupoids. These local Lie groupoids will inherit symplectic symplectic and contact structure as above. The local Lie groupoid integrating an algebroid is unique up to equivalence. Therefore with this local point of view, we do not see the difference between \(G_c\) and \(\widetilde{G}_c\) anyway.
\end{rem}

Now an algebroid is not always integrable. Even if \(G_s\) and \(G_c\) exist as topological groupoids, they may fail to have a compatible smooth structure. We can however always assume that there are local Lie groupoids.

\begin{defn}
    A local Lie groupoid \(G\) over a manifold \(M\) is a manifold with:
    \begin{itemize}
        \item source and target maps \(s,r \colon G \to M\) which are submersions
        \item a unit map \(u \colon M \to G\) which is an immersion
        \item a multiplication map \(m \colon \mathcal{U} \to G\) where \(\mathcal{U} \subset G^{(2)}\) is an open neighborhood of \(G \ _r\!\times_s M \cup M \ _r\!\times_s G\)
        \item an inversion map \(i \colon \mathcal{V} \to \mathcal{V}\)
    \end{itemize}
    These maps have to satisfy the usual relations of groupoid structure maps, whenever it makes sense.
\end{defn}

\begin{rem}
    By `whenever it makes sense', we mean for instance that the associativity relation:
    \[m(m(g,h),k) = m(g,m(h,k))\]
    has to hold for \((g,h,k)\in G^{(3)}\) such that also \((g,h),(h,k),(m(g,h),k)\) and \((g,m(h,k))\) all belong to \(\mathcal{U}\).
\end{rem}
%MAYBE PRADINES INSTEAD FOR LOCAL INTEGRATION ?
Every local Lie groupoid has a Lie algebroid the same way a groupoid does. It is proved in \cite{CrainicFernandes2003} that every algebroid integrates into a local Lie groupoid. Moreover, two such local Lie groupoids will have neighborhoods of the unit which are isomorphic (as local Lie groupoids). Finally, as on a Lie groupoid, every algebroid differential form (or section of the algebroid) induces a unique differential form (or vector field) on a neighborhood of the units which is right invariant.

\section{Heisenberg calculus on a groupoid}\label{Section: Groupoid Heisenberg calculus}

In this section we expose the features of the Heisenberg calculus on a groupoid. Given a Lie groupoid \(G\) with algebroid \(\A\). One can construct a pseudodifferential calculus for which the kernels of operators will be distributions on \(G\), conormal to the units. The symbols are then functions on \(\A^*\). The differential operators on that calculus are constructed from the sections of the Lie algebroid, which act as order 1 operators.
%Consider now a groupoid \(G\) whose algebroid \(\A\) is filtered by subbundles:
%\[0 = \A_0 \subset \A_1 \subset \cdots \subset \A_N = \A,\]
%with the condition on the Lie brackets
%\[\forall i,j\geq 1, [\Gamma(\A_i),\Gamma(\A_j)] \subset \Gamma(\A_{i+j}).\]
%We can modify the order of differential operators by considering sections of \(\A_k\) as operators of order \(k\). One can then adapt the construction of the pseudodifferential calculus to these operators. The symbols are now distributions on a family of graded nilpotent groups obtained from the filtration on the algebroid. In the case that will be of interest for us, they will be Heisenberg groups.
Consider now a groupoid \(G\) whose algebroid \(\A\) is filtered by a co-rank 1 subbundle \(\A_1 \subset \A\). We can consider a new order on differential operators by considering sections of \(\A\) that are not section of \(\A_1\) as having order 2 instead of 1. The construction of pseudodifferential operators is then modified to take into account this new order. The symbols are then operator valued functions on the unitary dual of a family of nilpotent group. If \(\A\) is the central extension of a symplectic algebroid \(\A_1\) then these groups will be Heisenberg groups. Historically, this calculus has been invented to study the hypoelliptic properties of sub-laplacians in a sub-riemannian setting (e.g. contact manifolds). We are here interested in the symbolic properties. Since the principal symbols are families of operators on Hilbert spaces, they have non-trivial projectors. This allows to construct the Szegö projector: a projector \(S\) such that both its kernel and image have infinite dimension. On a general groupoid it is not clear that such a projector would exist, but for our purpose we only need such an operator to be a projector modulo smoothing operators.

Let \(\A \to M\) be a Lie algebroid and \(\A_1\subset \A\) a co-rank 1 subbundle. We will assume that the vector bundle \(\A\) decomposes as \(\A = \A_1 \oplus \R\). Define
\begin{align*}
    \omega \colon \Gamma(\A_1) \wedge \Gamma(\A_1) & \to \CCC^{\infty}(M)               \\
    X\wedge Y                                      & \mapsto \pr_{\R}[(X,0),(Y,0)]_{\A}
\end{align*}
This bilinear form is tensorial, indeed with similar notations:
\[[fX,Y] = \rho(X)(f) Y + f[X,Y] \equiv f [X,Y] \mod \A_1.\]
Therefore we can consider \(\omega\) as a 2-form, \(\omega \colon \Lambda^2\A^1 \to \R\).

\begin{ex}
    If \(\A_1\) is a Lie algebroid with a 2-cocycle \(c \colon \Lambda^2\A_1 \to \R\), then consider its central extension \(\A = \A_1 \oplus \R\) with the Lie bracket given by Equation \ref{Equation: Central extension Lie algebroid}. The algebroid \(\A\) is filtered by \(\A_1\), and the 2-form \(\omega\) constructed above is equal to \(c\).
\end{ex}

Using the form \(\omega\), we can define a nilpotent group structure on the fibers of \(\A\). First the 2-form \(\omega\) defines a Lie algebra structure on the fiber \(\A_x\) by:
\[\forall x\in M,\forall (\xi_1,t_1),(\xi_2,t_2) \in \A_x, [(\xi_1,t_1),(\xi_2,t_2)]_{\A_x} := (0,\omega_x(\xi_1,\xi_2)). \]
This makes \(\A_x\) a graded nilpotent Lie algebra of step 2. Using Baker-Campbell-Hausdorff formula we can consider \(\A_x\) as the nilpotent group integrating it. This endows \(\A\) with a smooth family of Lie group structure on the fibers (i.e. a Lie groupoid with equal source and range). We denote \(\A\) endowed with this new structure of Lie groupoid by \(\mathfrak{A}\), we may also use this notation for the family of Lie algebras indifferently.

\begin{ex}
    Abelian Lie groups are examples of graded groups. In that case we can either consider them with every element of degree 1, or consider direct sums \(V_1 \oplus V_2\) of abelian groups where elements of \(V_1\) have degree 1 and those in \(V_2\) have degree 2.

    Another example is given by the Heisenberg groups. Their Lie algebra is generated by elements \(X_1,\cdots,X_n,Y_1,\cdots,Y_n,Z\) with the relations \([X_i,Y_i] = \delta_{i,j}Z\), \(Z\) is in the center and is the only of these elements of degree 2. We can describe it more geometrically by considering a symplectic vector space \((V,\omega)\). The form \(\omega\) is a 2-cocycle on the abelian group \(V\) and the corresponding central extension will be denoted \(\mathrm{Heis}(V,\omega)\). If we choose a Darboux basis \(X_1,\cdots,X_n,Y_1,\cdots,Y_n\) we recover the previous definition of the Heisenberg group with \(Z\) being the added variable of the central extension.
\end{ex}

The group structure at a point \(x\) depends on the rank of the 2-form \(\omega_x\).

\begin{lem}
    Let us decompose \(\A_x = V_x \oplus K_x \oplus \R\) such that \(V_x\) and \(K_x\) are orthogonal for \(\omega_x\), \(\omega_{x|K_x} = 0\) and \(\omega_x\) is non-degenerate on \(V_x\). Then we have:
    \[\AAA_x = \mathrm{Heis}(V_x,\omega_x) \oplus K_x,\]
    where \(K_x\) is seen as an abelian group in degree 1.
\end{lem}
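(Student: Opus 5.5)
The claim is essentially linear algebra, so I would prove it at the level of Lie algebras first and then pass to groups. Both sides are simply connected nilpotent groups: $\AAA_x$ is obtained from its Lie algebra through the Baker--Campbell--Hausdorff formula, and so is each factor on the right-hand side. An isomorphism of graded Lie algebras therefore integrates, through the exponential maps (which here are identities on the underlying vector spaces), to an isomorphism of graded groups. So it is enough to exhibit an isomorphism of graded nilpotent Lie algebras
\[\AAA_x = V_x\oplus K_x\oplus \R \;\cong\; \mathfrak{heis}(V_x,\omega_x)\oplus K_x,\]
where the right-hand side carries the direct-sum bracket and $K_x$ is abelian.

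First I would justify that a decomposition as in the statement exists. Take $K_x := \ker \omega_x = \{\xi \in \A_{1,x} : \omega_x(\xi,\cdot)=0\}$, and let $V_x$ be any complement of $K_x$ in $\A_{1,x}$. Then $\omega_x$ is non-degenerate on $V_x$, $K_x$ is $\omega_x$-orthogonal to everything, and $\omega_x|_{K_x}=0$. Conversely, the hypotheses of the lemma force $K_x$ to equal this radical: $K_x$ is orthogonal to $V_x$ and isotropic, hence lies in the kernel, and non-degeneracy on $V_x$ together with $\A_{1,x} = V_x \oplus K_x$ gives equality. This step is really a remark, since the statement already fixes the decomposition.

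Next I would compute the bracket. For $(v_1+k_1,t_1),(v_2+k_2,t_2)$ we have
\[[(v_1+k_1,t_1),(v_2+k_2,t_2)]_{\A_x} = (0,\omega_x(v_1+k_1,v_2+k_2)) = (0,\omega_x(v_1,v_2)),\]
using bilinearity, the orthogonality of $V_x$ and $K_x$, and $\omega_x|_{K_x}=0$. Consequently the map $(v+k,t)\mapsto ((v,t),k)$ intertwines this bracket with the bracket of $\mathfrak{heis}(V_x,\omega_x)\oplus K_x$: on the Heisenberg factor the bracket is $[(v_1,t_1),(v_2,t_2)] = (0,\omega_x(v_1,v_2))$, and $K_x$ is central and abelian. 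The map also preserves degrees, with $V_x$ and $K_x$ in degree 1 and $\R$ in degree 2.

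Finally I would integrate. Since the Lie algebras are step-2 nilpotent, the BCH product is $X\cdot Y = X+Y+\tfrac12[X,Y]$ on both sides, and the linear isomorphism above is then visibly a group isomorphism. There is no genuine obstacle in this argument. The only point requiring care is the sign and normalization convention for the central extension $\mathrm{Heis}(V,\omega)$: one must check that it matches $\tfrac12\omega$ in the BCH product, or else note that any two conventions give isomorphic groups via the rescaling $t\mapsto 2t$.
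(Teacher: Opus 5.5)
Your proposal is correct and follows the same route as the paper, whose proof says only that the decomposition comes from a suitable basis for $\omega_x$ and that the splitting then follows from the definition of the osculating group. Your bracket computation, the BCH integration, and your remark about normalizing the cocycle (the rescaling $t\mapsto 2t$) simply spell out that one-line argument in detail.
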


\begin{proof}
    The decomposition of \(\A_x\) can be done by choosing an appropriate basis for the 2-form \(\omega_x\). The result then follows directly from the definition of the osculating group.
\end{proof}

Again the case of interest in this paper is the algebroids obtained by central extension of a symplectic algebroid. In that case we obtain the following.

\begin{cor}\label{Corollary: Osculating central extension}
    If \(\A\) is the central extension of a symplectic algebroid \((\A_1,\omega)\) then the osculating groupoid \(\AAA = \mathrm{Heis}(\A_1,\omega)\) is a bundle of Heisenberg groups.
\end{cor}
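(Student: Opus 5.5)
The corollary follows from the preceding lemma once we identify the 2-form attached to the filtration \(\A_1 \subset \A\) with the symplectic form of \(\A_1\). The first step uses the example following the definition of the form \(\omega\) associated to a co-rank one filtration. When \(\A = \A_1 \oplus \R\) is the central extension by a 2-cocycle \(c\), with bracket given by Equation \ref{Equation: Central extension Lie algebroid}, we have
\[\pr_{\R}[(X,0),(Y,0)]_{\A} = c(X,Y).\]
So the tensorial form produced by the filtration is exactly \(c\). Here \(c\) is the symplectic form \(\omega\) of \((\A_1,\omega)\), which is a cocycle because it is closed for the algebroid differential.

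The second step applies the lemma pointwise. For each \(x \in M\), \(\omega_x\) is non-degenerate on \((\A_1)_x\) by the definition of a symplectic algebroid. Hence we may take \(V_x = (\A_1)_x\) and \(K_x = 0\) in the decomposition \(\A_x = V_x \oplus K_x \oplus \R\). The lemma then gives \(\AAA_x = \mathrm{Heis}((\A_1)_x,\omega_x)\) for every \(x\).

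The only point requiring care is passing from a pointwise statement to a statement about a bundle. No choice of a splitting \(V_x \oplus K_x\) is needed, since \(K_x = 0\) everywhere, so no discontinuity can arise from choosing such splittings. The group law on \(\A_x = (\A_1)_x \oplus \R\) comes from the Baker--Campbell--Hausdorff formula for the step-2 bracket \([(\xi_1,t_1),(\xi_2,t_2)] = (0,\omega_x(\xi_1,\xi_2))\). It is given explicitly by
\[(\xi_1,t_1)\cdot(\xi_2,t_2) = \bigl(\xi_1+\xi_2,\ t_1+t_2+\tfrac12\omega_x(\xi_1,\xi_2)\bigr).\]
This formula depends smoothly on \(x\) because \(\omega\) is a smooth section of \(\Lambda^2\A_1^*\). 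Therefore \(\AAA\) is the smooth bundle of groups \(\mathrm{Heis}(\A_1,\omega)\), namely the fiberwise central extension of \(\A_1\) by \(\R\) via \(\omega\). Each fiber is isomorphic to the standard Heisenberg group of dimension \(\operatorname{rk}\A_1 + 1\), as one sees by taking a Darboux basis of \(((\A_1)_x,\omega_x)\).
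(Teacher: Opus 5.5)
Your proposal is correct and matches the paper's argument, which is left implicit in the paper: the central-extension example identifies the filtration form with the cocycle, and non-degeneracy then lets you apply the preceding lemma with \(K_x = 0\). Your explicit BCH product \((\xi_1,t_1)\cdot(\xi_2,t_2) = (\xi_1+\xi_2,\ t_1+t_2+\tfrac12\omega_x(\xi_1,\xi_2))\) makes the smooth dependence on \(x\) visible, a point the paper does not spell out.
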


We now briefly describe how to construct Heisenberg pseudodifferential operators on such a groupoid, and how to obtain principal symbols. We mostly follow the approach of \cite{vanErpYuncken2019}. This uses the adiabatic groupoid of \(G\). As a groupoid is has the form
\[G^{ad} = G \times \R^* \sqcup \AAA \times \{0\} \rightrightarrows M \times \R.\]

The structure at time \(t \neq 0\) is the one of the groupoid \(G\) and at \(0\) of the osculating groupoid \(\AAA\). It has a natural smooth structure making it a Lie groupoid, which can be constructed with a variation of a deformation to the normal cone, see \cite{Mohsen2021}.

The adiabatic groupoid is endowed with the zoom action. For \(\lambda > 0\) we have
\begin{align*}
    \alpha_{\lambda} \colon G^{ad} & \mapsto G^{ad}                                \\
    (\gamma,t)                     & \mapsto (\gamma, \lambda^{-1}t) \quad t\neq 0 \\
    (x,\xi,0)                      & \mapsto (x,\delta_{\lambda}\xi,0).
\end{align*}
The map \(\delta_{\lambda} \colon \AAA \to \AAA\) is a groupoid homomorphism. On the Lie algebroid, its differential acts as \(\lambda \mathrm{Id}\) on \(\A_1\) and as \(\lambda^2\mathrm{Id}\) on \(\faktor{\A}{\A_1}\). The map \(\alpha_{\lambda}\) is a groupoid homomorphism and the map \(\lambda \to \alpha_{\lambda}\) is a group homomorphism from the positive real numbers to the groupoid automorphisms of \(G^{ad}\).

\begin{rem}
    The groupoid described above is a variation on the usual adiabatic groupoid, whose restriction over \(M \times 0\) would be the algebroid \(\mathcal{A}G\) (seen as a bundle of abelian groups). The groupoid described above is the Heisenberg/filtered version of the adiabatic groupoid. Since we will not use the unfiltered version, we will call adiabatic groupoid the one taking the filtration into account.
\end{rem}

We see pseudodifferential operators through their Schwartz kernel. This means that a pseudodifferential operator on a groupoid will be a particular kind of distribution on \(G\). We denote by \(\mathcal{E}'(G)\) the set of properly supported distributions on \(G\), a set \(X \subset G\) being proper if \(r_{|X}\) and \(s_{|X}\) are proper maps.

\begin{defn}
    A distribution \(P \in \mathcal{E}'(G)\) is a Heisenberg pseudodifferential operator of order \(n  \in \Z\) if there exists a distribution \(\mathbb{P} \in \mathcal{E}'(G^{ad})\) such that:
    \begin{itemize}
        \item \(\ev_{1*}\mathbb{P} = P\)
        \item \(\mathbb{P}\) is transverse to the range map,
        \item \(\mathbb{P}\) is quasi-homogeneous of order \(n\) for the zoom action
              \[\forall \lambda > 0, \alpha_{\lambda *}\mathbb{P} - \lambda^n \mathbb{P} \in \CCC^{\infty}(G^{ad}).\]
    \end{itemize}
    We denote by \(\Psi^n_H(G^{ad})\) the set of such distributions.
\end{defn}

Composition of operators is obtained by convolution of their Schwartz kernels. For pseudodifferential operators we obtain products:
\[\Psi^n_H(G) \times \Psi^m_H(G) \to \Psi^{n+m}_H(G), \quad n,m\in \mathbb{Z}.\]

Given an operator \(P\in \Psi^n_H(G)\) and two extensions \(\mathbb{P}, \mathbb{P}'\) as in the definition, then \(\ev_{0*}\mathbb{P} - \ev_{0*}\mathbb{P}' \in \CCC^{\infty}(\AAA)\). This allows to define a principal symbol
\[\sigma^n_H \colon \Psi^n_H(G) \to \Sigma^n(\AAA)\]
where \(\Sigma^n(\AAA)\) is the quotient of the space of order \(n\) quasi-homogeneous distributions on \(\AAA\) by the space of smooth functions. This space can be identified with a space of homogeneous distributions on the groupoid \(\AAA\) or, using a result of Taylor \cite{Taylor1984}, to smooth functions on \(\AAA^*\setminus M\) which are homogeneous of order \(n\) for the action \(\delta_{\lambda}^T, \lambda > 0\).

\begin{prop}[\cite{vanErpYuncken2019}, Proposition 70]\label{Proposition: Pseudos exact sequence}
    Let \(G\) be a groupoid with filtered algebroid \(\A_1\subset \A\) and corresponding osculating groupoid \(\AAA\). We get the exact sequence for all \(k \in \mathbb{Z}\):
    \[\xymatrix{0 \ar[r] & \Psi^{k-1}_H(G) \ar[r] & \Psi^k_H(G) \ar[r] & \Sigma^k(\AAA) \ar[r] & 0.}\]
\end{prop}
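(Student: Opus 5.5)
Exactness has to be checked at three places: injectivity of the inclusion $\Psi^{k-1}_H(G) \hookrightarrow \Psi^k_H(G)$, surjectivity of $\sigma^k_H$, and the identification $\ker \sigma^k_H = \Psi^{k-1}_H(G)$. Everything is read off the adiabatic groupoid $G^{ad}$, its zoom action $\alpha_\lambda$, and the evaluation maps $\ev_0,\ev_1$.

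\emph{Inclusion and injectivity.} Let $P \in \Psi^{k-1}_H(G)$ with extension $\mathbb{P}$ of order $k-1$. Put $\mathbb{P}' = t\,\mathbb{P}$, where $t$ is the coordinate on the $\R$ factor of $M\times\R$ pulled back by the range map. Since $t\circ\alpha_\lambda = \lambda^{-1}t$, we get
\[\alpha_{\lambda*}\mathbb{P}' = \lambda\, t\,\alpha_{\lambda*}\mathbb{P},\]
which is $\lambda^k\mathbb{P}'$ modulo smooth functions. Also $\ev_{1*}\mathbb{P}' = P$, transversality to $r$ is preserved, and $\ev_{0*}\mathbb{P}' = 0$. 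Hence $P \in \Psi^k_H(G)$ and $\sigma^k_H(P) = 0$. Injectivity is clear because both spaces sit inside $\mathcal{E}'(G)$.

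\emph{Kernel of the symbol.} Suppose $P \in \Psi^k_H(G)$ has an extension $\mathbb{P}$ with $\ev_{0*}\mathbb{P}$ smooth.
\begin{enumerate}
\item Extend $\ev_{0*}\mathbb{P}$ to a smooth, properly supported function on $G^{ad}$ and subtract it. The corrected $\mathbb{P}$ is still quasi-homogeneous modulo $\CCC^\infty$, and now $\ev_{0*}\mathbb{P} = 0$.
\item Since the distribution is transverse to $r$, hence to the submersion $G^{ad}\to \R$, we can divide it by $t$: $\mathbb{P} = t\,\mathbb{Q}$ with $\mathbb{Q}$ again transverse to $r$. This is a Hadamard-type division in the smooth parameter $t$.
\item Quasi-homogeneity of $\mathbb{Q}$ of order $k-1$ follows from $\alpha_{\lambda*}(t\mathbb{Q}) = \lambda^{-1}t\,\alpha_{\lambda*}\mathbb{Q}$. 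The smooth error must also be divisible by $t$, which holds after the step-1 correction because $\ev_0$ of both sides is then controlled.
\item We have $\ev_{1*}\mathbb{Q} = P$, so $P \in \Psi^{k-1}_H(G)$.
\end{enumerate}

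\emph{Surjectivity.} Take a quasi-homogeneous distribution $u$ of order $k$ on $\AAA$, transverse to the fibers.
\begin{enumerate}
\item Choose a tubular identification of a neighbourhood of $M\times\R$ in $G^{ad}$ with a neighbourhood of the zero section of $\A\times\R$ that is compatible with the filtration, i.e.\ an exponential-type map as in the deformation to the normal cone of \cite{Mohsen2021}.
\item Cut $u$ off near the units and decompose it as $\int_0^\infty \alpha_{\lambda*}\phi\,\lambda^{-k}\,d\lambda/\lambda$ plus a smooth term, with $\phi$ compactly supported. This is the van Erp--Yuncken averaging argument.
\item Extend $\phi$ to a compactly supported $\tilde\phi$ on $G^{ad}$ and define $\mathbb{P} := \int_0^\infty \lambda^{-k}\alpha_{\lambda*}\tilde\phi\,d\lambda/\lambda$, modified by a cutoff at $\lambda \le 1$ so that it converges.
\end{enumerate}
Then $\mathbb{P}$ is quasi-homogeneous of order $k$ and transverse to $r$, and $\ev_{0*}\mathbb{P} \equiv u$ modulo $\CCC^\infty$.

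I expect the main obstacle to be this surjectivity step. One must show that every homogeneous symbol is of the averaged form, which requires the essentially-homogeneous decomposition of distributions conormal to the units, and that the averaged integral is smooth away from the unit space of $G^{ad}$ uniformly in $t$. I would first try proving the decomposition in local coordinates on $\AAA$ with graded dilations, reducing to the known Euclidean graded case, and then globalise with partitions of unity on $M$.
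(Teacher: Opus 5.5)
The paper does not prove this statement; it only cites van Erp--Yuncken. So the question is whether your argument stands on its own.

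\textbf{Inclusion and kernel: fine.} Your first two parts are the standard argument and work. Multiplying an extension by $t$ raises the order by one without changing $\ev_{1*}$. Once $\ev_{0*}\mathbb{P}=0$, Hadamard division $\mathbb{P}=t\mathbb{Q}$ is legitimate for $r$-fibred distributions. The homogeneity defect $f_\lambda$ is divisible by $t$ because $f_\lambda|_{t=0}=\delta_{\lambda*}\mathbb{P}_0-\lambda^k\mathbb{P}_0=0$. There is one small slip: subtracting the smooth extension $F$ changes $\ev_{1*}$ by $F|_{t=1}$. Either take $F$ supported in $|t|<1/2$, or note that properly supported smooth kernels lie in every $\Psi^{j}_H(G)$.

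\textbf{Surjectivity: genuine gap.} You assert, without proof, that $u\equiv\int\lambda^{-k}\delta_{\lambda*}\phi\,d\lambda/\lambda$ modulo smooth functions. Moreover, with $\phi$ merely compactly supported and $\tilde\phi$ an arbitrary extension, your $\mathbb{P}$ need not exist. The cutoff $\lambda\le 1$ only handles $\lambda\to\infty$. At $\lambda\to 0$ one has, with $[\alpha]$ the graded degree,
\[\langle\delta_{\lambda*}\phi,g\rangle=\sum_{[\alpha]\le k}\lambda^{[\alpha]}\tfrac{\partial^\alpha g(0)}{\alpha!}\int\phi\,\xi^\alpha+O(\lambda^{k+1}).\]
Hence $\int_0^1\lambda^{-k-1}(\cdots)\,d\lambda$ diverges for $k\ge 0$ unless every moment of $\phi$ of graded order $\le k$ vanishes. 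At $t\neq 0$ one has $(\alpha_{\lambda*}\tilde\phi)_t=\tilde\phi_{\lambda t}$. The same vanishing is then needed for the rescaled profiles of $\tilde\phi_s$, to order $k$ in $s$, and a generic extension does not have it. So convergence, transversality and $\ev_{0*}\mathbb{P}\equiv u$ all remain open, as you acknowledge.

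\textbf{A direct construction closes it.} The averaging is unnecessary. Your filtered chart $\Phi(x,\xi,t)=(\exp(\delta_t\xi),t)$, equal to the identity at $t=0$, satisfies $\alpha_\lambda\circ\Phi=\Phi\circ\beta_\lambda$ with $\beta_\lambda(x,\xi,t)=(x,\delta_\lambda\xi,\lambda^{-1}t)$. Take $\psi$ equal to $1$ near the zero section and supported where $\exp$ is a diffeomorphism, and set $\mathbb{P}:=\Phi_*\bigl(\psi(\delta_t\xi)\,u\bigr)$. This $\mathbb{P}$ is properly supported and transverse to $r$, with $\ev_{0*}\mathbb{P}=u$. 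Since $\beta_{\lambda*}\bigl(\psi(\delta_t\xi)u\bigr)=\psi(\delta_t\xi)\,\delta_{\lambda*}u$,
\[\alpha_{\lambda*}\mathbb{P}-\lambda^k\mathbb{P}=\Phi_*\bigl(\psi(\delta_t\xi)\,(\delta_{\lambda*}u-\lambda^k u)\bigr)\in\CCC^\infty(G^{ad}).\]
The rescaled cutoff $\psi(\delta_t\xi)$ is what makes this work. A cutoff $\chi(t)$ in $t$ would instead leave the singular defect $(\chi(\lambda t)-\chi(t))\,\Phi_*u$.
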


\begin{thm}[\cite{vanErpYuncken2019}, Theorem 59] \label{Theorem: Asymptotic completeness}
    Let \(G\) be a Lie groupoid with filtered algebroid. Let \(P_k \in \Psi^{n_k}_H(G), k\in \mathbb{N}\) be a sequence of operators, with \((n_k)_{k\in \mathbb{N}}\) decreasing to \(-\infty\). There is an operator \(P \in \Psi^{n_0}_H(G)\) such that
    \[\forall k\in \mathbb{N}, P - \sum_{j = 0}^k P_k \in \Psi^{n_{k+1}}_H(G).\]
    Such an operator is unique modulo \(\Psi^{-\infty}_H(G)\).
\end{thm}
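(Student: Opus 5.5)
The existence part is a Borel-type summation carried out at the level of Schwartz kernels, using the zoom action on the adiabatic groupoid \(G^{ad}\) in place of the usual cutoff in the cotangent variable. Uniqueness is immediate once we know \(\bigcap_n \Psi^n_H(G)=\Psi^{-\infty}_H(G)\), i.e. the smooth properly supported kernels. Indeed, if \(P,P'\) both satisfy the conclusion, then \(P-P'\in\Psi^{n_{k+1}}_H(G)\) for every \(k\), and \(n_k\to-\infty\). So I treat uniqueness last and concentrate on existence.

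\emph{Step 1: normalisation.} For each \(k\), choose \(\mathbb{P}_k\in\mathcal{E}'(G^{ad})\) extending \(P_k\) and quasi-homogeneous of order \(n_k\). Fix a relatively compact neighbourhood \(U\) of the units in \(G\), with a corresponding neighbourhood \(U^{ad}\) of \(\AAA\times\{0\}\) in \(G^{ad}\). Multiplying by a cutoff equal to \(1\) near the units only changes everything by smooth kernels, so I may assume all \(\mathbb{P}_k\) are supported in \(U^{ad}\). Next I use the standard van Erp--Yuncken representation of a quasi-homogeneous kernel. Pick \(\chi\in\CCC^\infty_c(\R)\) equal to \(1\) near \(0\). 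Then, modulo smooth kernels,
\[
P_k \equiv \int_0^1 \lambda^{-n_k}\,\ev_{1*}\bigl(\alpha_{\lambda*} f_k\bigr)\,\frac{\mathrm{d}\lambda}{\lambda},
\]
where \(f_k\) is a compactly supported distribution on \(G^{ad}\), smooth away from the units, obtained from \(\mathbb{P}_k\) by differentiating in the zoom direction. The integrand is smooth for \(\lambda>0\); it concentrates on the units only as \(\lambda\to0\).

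\emph{Step 2: truncation.} Choose \(\varepsilon_k\downarrow0\) and define the truncated operators
\[
\widetilde P_k := \int_0^{\varepsilon_k}\lambda^{-n_k}\,\ev_{1*}\bigl(\alpha_{\lambda*} f_k\bigr)\,\frac{\mathrm{d}\lambda}{\lambda}.
\]
Then \(P_k-\widetilde P_k\) is smooth, so \(\widetilde P_k\in\Psi^{n_k}_H(G)\) with the same principal symbol as \(P_k\). Moreover, for \(j\geq 1\), every \(C^{j}\)-seminorm of \(\widetilde P_k\) tested against the order-\(n_{k+1}\) structure is bounded by a constant times a positive power of \(\varepsilon_k\), once \(n_k\) is sufficiently negative relative to \(j\). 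The \(\varepsilon_k\) are then chosen so that \(\sum_{k>K}\widetilde P_k\) converges in \(C^{j(K)}\), for an index \(j(K)\to\infty\) as \(K\to\infty\).

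\emph{Step 3: convergence and the order statement.} Set \(P=\sum_k\widetilde P_k\). I need \(P-\sum_{k\le K}P_k\in\Psi^{n_{K+1}}_H(G)\) for every \(K\). Up to a smooth kernel, this difference equals \(\sum_{k>K}\widetilde P_k\). I realise this tail as \(\ev_{1*}\) of the single adiabatic distribution \(\sum_{k>K}\int_0^{\varepsilon_k}\cdots\), which lives on \(G^{ad}\) with the \(\lambda\)-integral absorbed into the \(t\)-variable. I then check that it is transverse to \(r\) and quasi-homogeneous of order \(n_{K+1}\) modulo \(\CCC^\infty(G^{ad})\). The leading term contributes order \(n_{K+1}\), and the remaining terms have lower order. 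Since \(\varepsilon_k\to0\), each zoom translate differs from a finite sum only by something that is \(C^{j}\) for all \(j\), hence smooth.

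The main obstacle is making the estimate in Step 2 precise: controlling the smoothness away from the diagonal of the tail uniformly in \(k\), and showing that the infinite sum is still quasi-homogeneous modulo genuinely smooth kernels rather than merely \(C^j\) ones. I would first try the diagonal argument above, choosing \(\varepsilon_k\) so that the \(k\)-th term is less than \(2^{-k}\) in the first \(k\) seminorms.

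For uniqueness, I would check \(\bigcap_n\Psi^n_H(G)\subset\CCC^\infty\) directly from the definition. A distribution with an extension quasi-homogeneous of arbitrarily negative order is \(C^j\) for every \(j\) near the units, and it is smooth elsewhere by transversality.
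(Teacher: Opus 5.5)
The paper does not prove this statement; it quotes it from van Erp--Yuncken. Your uniqueness argument is fine: it is essentially tautological once one knows $\Psi^{n}_H(G)\subset\Psi^{n'}_H(G)$ for $n\le n'$. The existence part, however, has a genuine gap in Step 3, and the same gap affects the claim $P\in\Psi^{n_0}_H(G)$.

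Membership in $\Psi^{n_{K+1}}_H(G)$ requires a single extension $\mathbb{T}$ with $\alpha_{\mu*}\mathbb{T}-\mu^{n_{K+1}}\mathbb{T}\in\CCC^\infty(G^{ad})$. Your extension is a sum of pieces $\mathbb{F}_k=\int_0^{\varepsilon_k}\lambda^{-n_k}\alpha_{\lambda*}f_k\,\mathrm{d}\lambda/\lambda$, and each piece is quasi-homogeneous of its own order $n_k$. Hence
\[
\alpha_{\mu*}\mathbb{T}-\mu^{n_{K+1}}\mathbb{T}=\sum_{k>K+1}\bigl(\mu^{n_k}-\mu^{n_{K+1}}\bigr)\mathbb{F}_k+(\text{smooth}),
\]
and every term with $n_k<n_{K+1}$ is singular along the units of $G^{ad}$. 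In this definition, a distribution quasi-homogeneous of order $n'<n$ is not quasi-homogeneous of order $n$. So ``the remaining terms have lower order'' proves nothing. Your $C^j$-tail remark cannot repair this either, because the finitely many non-tail terms already produce a non-smooth discrepancy.

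The missing idea is how the inclusion $\Psi^{n}_H\subset\Psi^{n+1}_H$ is realized on the adiabatic groupoid. Since $\alpha_\lambda^{-1}(\gamma,t)=(\gamma,\lambda t)$, one has $\alpha_{\lambda*}(t\,\mathbb{P})=\lambda t\,\alpha_{\lambda*}\mathbb{P}$. Thus $t\,\mathbb{P}$ is quasi-homogeneous of order $n+1$, is still transverse to $r$, and satisfies $\ev_{1*}(t\,\mathbb{P})=\ev_{1*}\mathbb{P}$. The orders here are integers, so powers of $t$ suffice. The tail should therefore be extended as $\mathbb{T}_K=\sum_{k>K}t^{\,n_{K+1}-n_k}(\mathbb{P}_k-F_k)$, where every term has the same order $n_{K+1}$.

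Convergence is then a plain Borel argument, with no Littlewood--Paley truncation:
\begin{itemize}
\item For $n_k\ll 0$, $\mathbb{P}_k$ is $C^{j_k}$ on $G^{ad}$ with $j_k\to\infty$. This is the same regularity lemma you invoke for uniqueness.
\item By density of smooth functions in $C^{j}$, choose smooth properly supported $F_k$ such that $t^{c}(\mathbb{P}_k-F_k)$ has $C^{j_k}$-size at most $2^{-k}$ on the $k$-th set of a compact exhaustion, for all $0\le c\le n_0-n_k$.
\item Each term of $\alpha_{\mu*}\mathbb{T}_K-\mu^{n_{K+1}}\mathbb{T}_K$ is then smooth, and its tails are $C^j$ for every $j$, so the whole expression is smooth.
\item Transversality to $r$ follows in the same way, and the restrictions $F_k|_{t=1}$ only change things by smoothing kernels.
\end{itemize}

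There are two smaller problems. First, your Step 1 formula is a Debord--Skandalis-type characterization that is not contained in the definition used here. As you describe it, with $f_k$ only smooth off the units and obtained by differentiating along the zoom, it is inconsistent. Differentiating along the zoom and subtracting $n_k\mathbb{P}_k$ gives a smooth function, if anything. Moreover, $P_k-\widetilde P_k=\int_{\varepsilon_k}^1\cdots$ is smooth only if $f_k$ is smooth for $t>0$. Second, in your uniqueness paragraph, smoothness away from the units does not come from transversality. It comes from quasi-homogeneity together with proper support: the zoom pushes non-unit points out of the support as $t\to 0$.
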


\begin{defn}
    An operator \(P\) satisfying the condition of the previous theorem is called an asymptotic limit to the sequence of operators \((P_k)_{k\geq 0}\). We write
    \[P \sim \sum_{k\geq 0} P_k.\]
\end{defn}

\begin{cor}\label{Corollary: Total symbol}
    Let \(G\) be a Lie groupoid with filtered algebroid. There is a linear isomorphism
    \[\faktor{\Psi^0_H(G)}{\Psi^{-\infty}_H(G)} \cong \prod_{k\geq 0}\Sigma^{-k}(\AAA).\]
\end{cor}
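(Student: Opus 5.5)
The plan is to construct the isomorphism by choosing, once and for all, a linear section of each principal symbol map, and then to use asymptotic completeness (Theorem \ref{Theorem: Asymptotic completeness}) to turn formal sums of symbols into actual operators. By Proposition \ref{Proposition: Pseudos exact sequence}, for every \(k\geq 0\) the map \(\sigma^{-k}_H \colon \Psi^{-k}_H(G) \to \Sigma^{-k}(\AAA)\) is surjective. Since we work with vector spaces, we can fix a linear right inverse (a quantization map) \(q_k \colon \Sigma^{-k}(\AAA) \to \Psi^{-k}_H(G)\) with \(\sigma^{-k}_H\circ q_k = \mathrm{Id}\). A concrete choice is to extend a representative distribution on \(\AAA\) to a quasi-homogeneous distribution on \(G^{ad}\) using a tubular neighbourhood of \(\AAA\times\{0\}\) and a cutoff, and then evaluate at \(t=1\). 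Only linearity is needed, though, so the abstract choice is enough.

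We then define \(\Phi \colon \prod_{k\geq 0}\Sigma^{-k}(\AAA) \to \Psi^0_H(G)/\Psi^{-\infty}_H(G)\) by \(\Phi((a_k)_k) = [P]\) with \(P \sim \sum_k q_k(a_k)\). The existence of \(P\) and its uniqueness modulo \(\Psi^{-\infty}_H(G)\) are exactly Theorem \ref{Theorem: Asymptotic completeness}, applied with \(n_k=-k\). Linearity follows because if \(P\sim\sum q_k(a_k)\) and \(Q\sim\sum q_k(b_k)\), then \(\lambda P+\mu Q\) is an asymptotic limit of \(\sum q_k(\lambda a_k+\mu b_k)\). Uniqueness then gives the class.

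For the inverse, we define \(\Sigma\) inductively. Given \(P\in\Psi^0_H(G)\), set \(a_0=\sigma^0_H(P)\). Then \(P-q_0(a_0)\in\Psi^{-1}_H(G)\) by exactness, so set \(a_1=\sigma^{-1}_H(P-q_0(a_0))\). In general, \(a_k=\sigma^{-k}_H(P-\sum_{j<k}q_j(a_j))\); exactness at each stage guarantees that the remainder lies in \(\Psi^{-k}_H(G)\). Each \(a_k\) depends linearly on \(P\). If \(P\in\Psi^{-\infty}_H(G)\), then inductively every \(a_k=0\), because the remainder is always smoothing and \(\sigma^{-k}_H\) kills \(\Psi^{-k-1}_H(G)\supset\Psi^{-\infty}_H(G)\). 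So the map descends to the quotient.

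Finally we check the two compositions. By construction, \(P-\sum_{j\le k}q_j(a_j)\in\Psi^{-k-1}_H(G)\) for all \(k\), so \(P\) is an asymptotic limit of \(\sum q_k(a_k)\), and uniqueness gives \(\Phi\circ\Sigma=\mathrm{Id}\). Conversely, if \(P\sim\sum q_k(b_k)\), induction shows \(a_k=b_k\). Indeed, \(P-\sum_{j<k}q_j(b_j)\equiv q_k(b_k)\) modulo \(\Psi^{-k-1}_H(G)\), whose \(\sigma^{-k}_H\)-symbol is \(b_k\). The only delicate point is the bookkeeping that \(\bigcap_k\Psi^{-k}_H(G)=\Psi^{-\infty}_H(G)\), which is the definition of the latter; that is also what makes the uniqueness in Theorem \ref{Theorem: Asymptotic completeness} meaningful. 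The isomorphism is non-canonical, depending on the choice of the \(q_k\).
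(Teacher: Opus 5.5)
Your proposal is correct and follows essentially the same route as the paper. Both fix sections of the principal symbol maps, use asymptotic completeness (Theorem \ref{Theorem: Asymptotic completeness}) to go from sequences of symbols to classes of operators, and peel off symbols inductively via the exact sequence of Proposition \ref{Proposition: Pseudos exact sequence} for the inverse; your insistence that the sections \(q_k\) be linear, and your explicit check of both compositions and of well-definedness on the quotient, spell out points the paper leaves implicit.
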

\begin{proof}
    We fix for every \(k\) a map \(\sigma \mapsto P_{\sigma}^{(k)}\) which is a section to the symbol map \(\sigma^k_H \colon \Psi^{k}_H(G) \to \Sigma^k(\AAA)\). We can then define a map
    \[\prod_{k\geq 0}\Sigma^{-k}(\AAA) \to \faktor{\Psi^0_H(G)}{\Psi^{-\infty}_H(G)}\]
    by sending a sequence \((\sigma_k)_{k\geq 0}\) to the class of an asymptotic limit of the sequence of operators \((P_{\sigma_k}^{(k)})_{k\geq 0} \). Such an asymptotic limit is unique modulo \(\Psi^{-\infty}_H(G)\) so the map is well defined.

    Conversely, given \(P \in \Psi^0_H(G)\), we can define a sequence of symbols inductively. Define first \(\sigma_0 := \sigma^0_H(P)\) and \(P_1 = P - P^{(0)}_{\sigma_0}\). We have \(P_1 \in \Psi^{-1}_H(G)\). We can then inductively define symbols \(\sigma_k \in \Sigma^k(\AAA)\) so that
    \[\forall n \geq 0, P_n = P - \sum_{k = 0}^{n-1}P_{\sigma_k}^{(-k)} \in \Psi^{-n}_H(G),\]
    by setting \(\sigma_{n} = \sigma^{-n}_H(P_n)\). Two operators that are equal modulo \(\Psi^{-\infty}_H(G)\) would produce the same sequence of symbols so we get a map
    \[\faktor{\Psi^0_H(G)}{\Psi^{-\infty}_H(G)} \to \prod_{k\geq 0}\Sigma^{-k}(\AAA).\]
    This map is the inverse of the other one above because by construction we have
    \[P \sim \sum_{k\geq 0} P_{\sigma_k}^{(-k)}.\]
\end{proof}

This corollary gives the idea of the construction of the star-product. Using Toeplitz operators, we can reduce the symbol space from \(\Sigma^k(\AAA)\) to \(\CCC^{\infty}(M)\). We then obtain formal power series with a similar asymptotic limit argument.

\begin{rem}
    Choosing sections in the proof can be done explicitly by choosing appropriate exponential cut-offs as in \cite{vanErpYuncken2019}. In particular, we may choose the operators with support as close to the units as we want.
\end{rem}

Let \(G\) be a local Lie groupoid integrating \(\A\) and let \(\mathcal{U}\subset G^{(2)}\) be the set of composable pairs on which the multiplication is defined. We can define the set \(\Psi^{\bullet}_H(G)\) the same way as on a groupoid. It is however no longer an algebra since the convolution of kernels uses the groupoid multiplication. Notice though that if \(P,Q \in \Psi^{\bullet}_H(G)\) have respective support \(A,B\subset G\) and \(A \ _r\! \times_s B \subset \mathcal{U}\), then \(PQ\) is well defined, is a pseudodifferential operator, and has support included in \(AB:= \{m(g,h), g\in A, h\in B\}\subset G\). We call two such subsets \(A,B \subset G\) composable. We call a subset \(B \in G\) \(n\)-composable, \(n\in \mathbb{N}\) if \(B\) is \((n-1)\)-composable and \(B\) and \(B^{n-1}\) are composable (and so are \(B^{n-1}\) and \(B\)). By taking arbitrarily small neighborhood of the units in \(G\), we can find \(n\)-composable subsets that contain the units for arbitrary \(n\). We can also require that such a set (and all it's well defined power) to be included in the domain of the inverse \(\mathcal{V}\subset G\).

\begin{rem}
    Even though we can take \(n\) as large as we want, this might shrink the corresponding set in the process. If \(\A\) is not integrable, there will be no neighborhood of the units that is \(n\)-composable for every \(n \in \mathbb{N}\). If \(U \subset G\) was such a set, the set \(\bigcup_{n\in \mathbb{N}}U^n\) would be a Lie groupoid integrating \(\A\).
\end{rem}

\begin{prop}
    If \(G\) is a local groupoid, then the space \(G^{ad}\) is also endowed with a natural local groupoid strucure, which restricts on each fiber of \(G^{ad} \to \R\) to the local Lie groupoid structure of \(G\), or to the groupoid structure of \(\AAA\).
\end{prop}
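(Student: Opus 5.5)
To prove the proposition, I will transport the usual construction of the (filtered) adiabatic groupoid, as a deformation to the normal cone, to the local setting. The key observation is that every ingredient of that construction is local near the units. Concretely, I choose a splitting $\A=\A_1\oplus\R$ and a local exponential map adapted to the filtration, that is, a diffeomorphism $\psi$ from a neighborhood $W$ of the zero section in $\A$ onto a neighborhood $U$ of $M$ in $G$ with $\psi(0_x)=x$ and $r\circ\psi=\pi$. For instance, I can take the composite of the flows of right-invariant vector fields associated to a local frame of $\A$. I then define the smooth structure on $G^{ad}$ in the standard way:
\begin{itemize}
\item on $G\times\R^*$ it is the product structure;
\item near $M\times\{0\}$ it is given by the chart $\Phi\colon \{(x,\xi,t): \delta_t\xi\in W\}\to G^{ad}$, $(x,\xi,t)\mapsto(\psi(\delta_t\xi),t)$ for $t\neq0$ and $(x,\xi,0)$ for $t=0$.
\end{itemize}
The compatibility of these charts for two choices of $\psi$ is exactly the statement proved in \cite{Mohsen2021} in the global case, and that proof only uses the germ of $G$ near the units. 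Hence the smooth structure is well defined and independent of choices.

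Next I define the structure maps. Source and range are $(\gamma,t)\mapsto(s(\gamma),t)$ and $(x,\xi,0)\mapsto(x,0)$, and similarly for $r$. The unit map is $(x,t)\mapsto(x,t)$. For multiplication, I take as domain
\[
\mathcal{U}^{ad}=(\mathcal{U}\times\R^*)\sqcup(\AAA^{(2)}\times\{0\}),
\]
and likewise the inversion domain $\mathcal{V}^{ad}=(\mathcal{V}\times\R^*)\sqcup(\AAA\times\{0\})$. On each slice the maps are the local structure of $G$, respectively the group-bundle structure of $\AAA$. Two things then need checking: that $\mathcal{U}^{ad}$ and $\mathcal{V}^{ad}$ are open neighborhoods of the required sets in the smooth structure above, and that multiplication and inversion are smooth across $t=0$. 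The groupoid identities then hold "whenever it makes sense" fibrewise in $t$, since they hold on each slice, and they extend by continuity, which gives the last claim of the statement directly.

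The main obstacle is the openness and smoothness at $t=0$. In the chart $\Phi$, multiplication for $t\neq0$ reads
\[
(\xi,\eta,t)\mapsto \delta_t^{-1}\psi^{-1}\big(\psi(\delta_t\xi)\psi(\delta_t\eta)\big).
\]
This is defined as soon as $(\psi(\delta_t\xi),\psi(\delta_t\eta))\in\mathcal{U}$, which holds for $t$ small and $(\xi,\eta)$ in any compact set, because $\mathcal{U}$ contains the units and $\delta_t\xi\to0$. This gives openness of $\mathcal{U}^{ad}$ near $\AAA^{(2)}\times\{0\}$. For smoothness, I Taylor-expand the local BCH-type formula of $G$ in the chart $\psi$. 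The filtration condition, namely that the bracket of two sections of $\A_1$ lands in $\A$ with $\A_1$-part of degree one, is what makes the rescaled expression extend smoothly to $t=0$ with limit the Heisenberg product $\xi\cdot\eta=\xi+\eta+\tfrac12(0,\omega(\xi,\eta))$. This is precisely the local computation done in \cite{Mohsen2021} and \cite{vanErpYuncken2019}, and I expect to cite it: it only involves jets of the multiplication at the units, so the global hypothesis there plays no role. Smoothness of inversion is handled identically, and so is the check that $s,r$ are submersions and $u$ an immersion near $t=0$; in the chart these are just $(x,\xi,t)\mapsto(x,t)$ up to a smooth correction.
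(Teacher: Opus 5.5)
Your overall route is the paper's: the same domains $\mathcal U^{ad}=(\mathcal U\times\R^*)\sqcup(\AAA^{(2)}\times\{0\})$ and $\mathcal V^{ad}$, structure maps defined slice by slice, and smoothness across $t=0$ taken from the standard adiabatic construction, which the paper simply asserts. Your openness argument is more careful than the paper's ``since $\mathcal U$ is open in $G^{(2)}$''. What is really used is that $\mathcal U$ contains the pairs of units, so that $(\psi(\delta_t\xi),\psi(\delta_t\eta))\in\mathcal U$ for small $t$.

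One step fails as written, however: the choice of $\psi$ and the claim that the charts are compatible and independent of choices. For two admissible $\psi$'s, the transition $\phi=\psi_2^{-1}\psi_1$ must have fibrewise linear part equal to the identity at the zero section. Its quadratic part must also send $\A_1\times\A_1$ into $\A_1$. The reason is that $\delta_t^{-1}\phi\,\delta_t$ converges at $t=0$ to $\xi\mapsto\xi+(0,Q_2(\xi_1,\xi_1))$, where $Q_2$ is the $\R$-component of that quadratic part, whereas your charts are the identity on the zero fibre.

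Your example, composites of flows (coordinates of the second kind), violates this condition. Take $M$ a point and $G_c$ the Heisenberg group with $[X,Y]=Z$. Since $\exp(aX)\exp(bY)\exp(cZ)=\exp(aX+bY+(c+\tfrac12 ab)Z)$, the transition from the second-kind chart to the first-kind chart is $(a,b,c,t)\mapsto(a,b,c+\tfrac12 ab,t)$ for every $t\neq0$, but the identity at $t=0$. So the charts are not even continuously compatible. Moreover, in the second-kind chart the limiting law is $(a+a',b+b',c+c'-a'b)$, not the BCH law you display. Local frames exist only locally on $M$, so you must compare charts on overlaps, and different frames or orderings give different $Q_2$. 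Hence ``well defined and independent of choices'' is false for this class.

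The fix is to use first-kind exponentials. Take $\psi(x,\xi)$ to be the time-one flow from $x$ of the invariant vector field of the constant-coefficient section $X_\xi=\sum_i\xi_iX_i$, for a local frame whose first vectors span $\A_1$; alternatively, use the exponential of a connection preserving $\A_1$.
\begin{itemize}
\item The quadratic part of a transition is then $\pm\tfrac12 D_{\rho(\xi)}(X_\xi-X'_\xi)(x)$, which lies in $\A_1$ when $\xi\in\A_1$.
\item The local product $\psi^{-1}(\psi(\xi)\psi(\eta))$ has no pure quadratic terms (set $\xi=0$ or $\eta=0$), and its mixed quadratic term is the BCH bracket term.
\item Consequently the rescaled product converges exactly to the osculating product on $\AAA$.
\end{itemize}
With this correction your argument goes through, and it is more detailed than the paper's.
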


\begin{proof}
    The source, range and unit maps are defined by functoriality of the deformation to the normal cone. Let \(\mathcal{U} \subset G^{(2)}\) and \(\mathcal{V} \subset G\) be the open sets on which the multiplication and inverse are defined. As a set, we have
    \[(G^{ad})^{(2)} = G^{(2)} \times \R^* \sqcup \AAA \times_M\AAA\times \{0\}.\]
    We then define
    \begin{align*}
        \mathcal{U}^{ad} & := \mathcal{U} \times \R^* \sqcup \AAA \times_M \AAA \times \{0\}, \\
        \mathcal{V}^{ad} & := \mathcal{V} \times \R^* \sqcup \AAA \times \{0\}.
    \end{align*}
    Since \(\mathcal{U}\) is open in \(G^{(2)}\) and \(\mathcal{V}\) open in \(G\) then \(\mathcal{U}^{ad}\) and \(\mathcal{V}^{ad}\) are open in \((G^{ad})^{(2)}\) and \(G^{ad}\) respectively. We can then define the multiplication and inverse the same way they would be defined for the adiabatic groupoid:
    \begin{align*} m((g,t),(g',t) )        & := (gg',t),        & \quad (g,g') \in \mathcal{U}, t \neq 0 \\
               m((x,\xi,0),(x,\xi',0)) & := (x,\xi\xi',0),  & \quad x \in M, \xi,\xi' \in \AAA_x     \\
               i(g,t)                  & := (g^{-1},t),     & \quad g\in \mathcal{V}, t\neq 0        \\
               i(x,\xi,0)              & := (x,\xi^{-1},0), & \quad  x\in M, \xi \in \AAA_x.
    \end{align*}
    These maps endow \(G^{ad}\) with a local Lie groupoid structure.
\end{proof}

We can define pseudodifferential operators on a local Lie groupoid the same way as they were defined on a Lie groupoid. To be able to compose two operators however, we need to ensure that their supports are composable. If that is the case, the convolution of the two operators is still a pseudodifferential operator as in the Lie groupoid case.

\begin{defn}
    Let \(U \subset G_c\) be an open neighborhood of the units. We write \(\Psi^k_H(U) \subset \Psi^k_H(G_c) \) for the subset of pseudodifferential operators that are supported in \(U\).
\end{defn}
If \(U,V \subset G_c\) are composable, we get a product
\[\Psi^k_H(U)\times \Psi^{\ell}_H(V) \to \Psi^{k+\ell}_H(UV).\]
In particular if \(U \subset G_c\) is 2-composable, then we get a product:
\[\Psi^k_H(U)\times \Psi^{\ell}_H(U) \to \Psi^{k+\ell}_H(U^2).\]

Since the principal symbol only depends on the behavior of the operator near the units, we get for every \(U\) a map
\(\sigma^k_H\colon \Psi^k_H(U) \to \Sigma^k(\AAA)\) such that if \(U \subset V\) then the following diagram commutes:
\[\xymatrix{\Psi^k_H(U) \ar[dr]^{\sigma^k_H} \ar@{^{(}->}[d] \\
        \Psi^k_H(V) \ar[r]^{\sigma^k_H} & \Sigma^k(\AAA) .}\]
In particular the notation \(\sigma^k_H\) is unambiguous, despite the lack of reference to the support. Moreover the principal symbol is still multiplicative in the following sense. If \(U \subset G_c\) is \(2\)-composable, then we get the commuting diagram, where the vertical arrows correspond to composition of operators and products of symbols:
\[\xymatrixcolsep{5pc}\xymatrix{\Psi^k_H(U) \times \Psi^{\ell}_H(U) \ar[r]^-{\sigma^k_H \otimes\sigma^{\ell}_H} \ar[d] & \Sigma^k(\AAA) \times \Sigma^{\ell}(\AAA)\ar[d] \\
    \Psi^{k+\ell}_H(U^2) \ar[r]^{\sigma^{k+\ell}_H} & \Sigma^{k+\ell}(\AAA).}\]

\section{Toeplitz operators}\label{Section: Toeplitz operators}

Classically, Toeplitz operators arise as the compression of multiplication operators by a projection. Typically on the circle \(\mathbb{S}^1\), we can consider the Hardy space \(H^2 \subset L^2(\mathbb{S}^1)\), the subspace of \(L^2(\mathbb{S}^1)\) of functions that extend holomorphically to the whole unit disk. Let \(S\) be the orthogonal projector onto that subspace. Toeplitz operators are operators of the form \(SM_fS\) where \(M_f\) is the multiplication operator by a continuous function \(f \in \CCC(\mathbb{S}^1)\). The same construction applies if we replace \(\mathbb{S}^1\) by the boundary of a complex manifold, under a pseudoconvexity assumption. This pseudoconvexity actually implies that the boundary is a contact manifold. The construction of the Szegö projector and the corresponding Toeplitz operators has been extended to contact manifolds \cite{BoutetdeMonvelGuillemin1981,EpsteinMelrose1998,EpsteinMelroseUnpublished} (two constructions use a priori different operators to define the Szegö projection but are equivalent). It was also recently extended to contact fibrations in \cite{CrenRezaei2026} and corresponds to families of Toeplitz operators parameterized by a base space.

Let \((\A_s,\omega)\) be a symplectic algebroid over a manifold \(M\), and \(\A_c\) its corresponding central extension. Let \(\mathfrak{A}\) be the corresponding osculating groupoid. Since we took the central extension of a symplectic algebroid, it is a bundle of Heisenberg algebras.

Heisenberg groups have two types of irreducible unitary representations: the infinite dimensional ones, parameterized by \(\R^*\) and the characters. The characters of \(\mathfrak{A}\) are parameterized by \(\A_s^*\). Its infinite dimensional representations are parameterized by \(M \times \R^*\). The representation corresponding to \(\lambda\neq 0\) is characterized by the fact that it sends \(Z\), the generator of the center, to \(\mathrm{i}\lambda\mathrm{Id}\). We will use the infinite dimensional representations to define the symbol of the Szegö projection.

Let \(J\colon \A_s \to \A_s\) be a complex structure compatible with \(\omega\). This means that we require \(\omega(J\cdot,J\cdot) = \omega\) and \(\omega(J\cdot,\cdot)\) to be a positive bilinear form. Such a complex structure always exists. We can take the complexified bundle, decomposed into eigenspaces for \(J\) \(\A_s\otimes \CC = \A_s^{1,0} \oplus \A_s^{0,1}\). The metric induced by \(\omega\) and \(J\) then provides a hermitian metric \(h\) on \(\A_s^{1,0}\). Consider the bundle of symmetric Fock spaces:
\[\F^+(\A_s^{1,0}) = \bigoplus_{n \in \mathbb{N}}^{\perp} \mathrm{Sym}^n\left(\A_s^{1,0}\right),\]
where \(\mathrm{Sym}^n\left(\A_1^{1,0}\right)\) is taken with the metric \(h^{\otimes n}\) and the sum is an \(\ell^2\)-sum. We obtain this way a Hilbert bundle.

The osculating group \(\AAA_x, x \in M\) acts naturally on the fiber of \(\F^+(\A_s^{1,0})_x\) defining an action \(\pi_+\). We can rescale this action and write \(\pi_{\lambda} := \sqrt{\lambda}\pi_+, \lambda > 0\). If we reverse the orientation, i.e. consider the opposite symplectic structure \(-\omega\), we obtain a similar action \(\pi_- \colon \AAA_x \to \mathcal{U}(\F^+(\A_1^{0,1})_x)\). Rescaling once again \(\pi_{\lambda} = \sqrt{-\lambda}\pi_{-}, \lambda <0\). We obtain this way all the infinite dimensional irreducible representations of \(\AAA\). The particular expression of the representations \(\pi_{\pm}\) will not be used here, the interested reader may find it for instance in \cite{EpsteinMelroseUnpublished}.
%Let \(N\) be the unbounded operator on \(\F^+(H^{1,0})\) whose restriction to \(\mathrm{Sym}^n\left(\A_1^{1,0}\right)\) is \(n\mathrm{Id}\). If \(\xi \in \A_{1,x}^{1,0}\) is a unit vector, it induces the creation operator
%\[\pi_{\lambda}(\xi) = \sqrt{\lambda N} \circ \mathrm{Sym}(\xi \otimes \cdot),\]
%which is an unbounded operator with domain containing the algebraic sum of the \(\mathrm{Sym}^n\left(\A_1^{1,0}\right), n \in \mathbb{N}\). Define \(\pi_{\lambda}(\overline{\xi}) := -\pi_{\lambda}(\xi)^*\).
%A quick computations shows that \([\pi_{\lambda}(\overline{\xi}), \pi_{\lambda}(\xi)] = \)

Consider the function:
\begin{align*}
    s_0 \colon \AAA^* \setminus M & \to \R                                                    \\
    (x,\xi,\eta)                  & \mapsto \exp\left(-\frac{\omega_x(J\xi,\xi)}{\eta}\right)
\end{align*}

In the formula above, we have identified the vector bundles \(\AAA\) and \(\AAA^*\) using the symplectic form, the same formula can be written directly by replacing \(\omega\) and \(J\) by their transpose.

The map \(s_0\) is smooth and homogeneous for the Heisenberg dilations, it defines a principal symbol of order zero. In light of the above decomposition of the irreducible representations we have \(\pi_-(s_0) = 0\) and \(s_0\) also vanishes on characters. On the other hand, \(\pi_+(s_0(x))\) is the projector onto the one dimensional subspace \(\mathrm{Sym}^0\left(\A_{s,x}^{1,0}\right) \subset \F^+(\A_{s,x}^{1,0})\). The symbol \(s_0\) is called a ground state projection.

\begin{rem}[On the necessity of the Heisenberg group]
    It would be tempting to replace \(\omega\) by a cocycle which might be degenerate (like the Poisson tensor \(\pi\) on the algebroid \(T^*M \to M\) of a Poisson manifold). We can still construct a central extension and Heisenberg operators on a groupoid which integrates this extension. However the formula for \(s_0\) would not be smooth on points where the form degenerates. Moreover, even if we change the formula, we won't be able to get a non-trivial projector. Indeed, if there is a point \(x \in M\) for which \(\omega_x\) is degenerate, then we have seen in proposition \ref{Corollary: Osculating central extension} that the osculating group at \(x\) is of the product of a Heisenberg group by a positive dimensional abelian group. One can then show that the only projectors in the algebra \(\Sigma^0(\AAA_x)\) (or even its natural \(C^*\)-algebraic completion) are \(0\) and \(\mathrm{Id}\), see e.g. \cite{vanErp2011Unpublished}.
\end{rem}

\begin{defn}
    Let \(G_c\) be a Lie groupoid with algebroid \(\A_c\). A \(G_c\)-Szegö operator is an operator \(S \in \Psi^0_H(G_c)\) with \(\sigma^0_H(S)\) being a ground state projector and \(S^2 - S \in \Psi^{-\infty}_H(G_c)\).
\end{defn}

\begin{rem}
    In the literature, these operators are called formal Szegö projectors. In the groupoid setting, it is very unlikely that we can find such an operator which is actually a projector. On the other hand, let \(S \in \Psi^0_H(G_c)\) be an operator such that \(\sigma^0_H(S)\) is a ground state projection, then we have \(S^2 -S \in \Psi^{-1}_H(G_c)\). Using asymptotic completeness of the calculus, we can modify \(S\) to get a \(G_c\)-Szegö operator, with the same principal symbol.
\end{rem}

\begin{lem}
    Let \(S \in \Psi^0_H(G_c)\) with \(S^2 - S \in \Psi^{-k}_H(G_c), k \geq 1\). Then there exists \(S' \in \Psi^0_H(G_c)\) with \(\sigma^0_H(S) = \sigma^0_H(S')\) and \(S'^2 -S' \in \Psi^{-2k}_H(G_c)\).
\end{lem}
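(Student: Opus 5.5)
Set \(R := S^2 - S \in \Psi^{-k}_H(G_c)\). The idea is to use the standard Newton-type correction for idempotents, as in the Banach algebra argument that an almost-idempotent is close to an idempotent: set
\[S' := S + (1-2S)R = S + R - 2SR,\]
or equivalently \(S' = 3S^2 - 2S^3\). Since \(R \in \Psi^{-k}_H(G_c)\subset\Psi^{-1}_H(G_c)\) and \(S\in\Psi^0_H(G_c)\), the correction \((1-2S)R\) lies in \(\Psi^{-k}_H(G_c)\subset \Psi^{-1}_H(G_c)\). By the exact sequence of Proposition \ref{Proposition: Pseudos exact sequence}, it therefore does not change the principal symbol, so \(\sigma^0_H(S') = \sigma^0_H(S)\). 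This uses only that compositions respect orders, \(\Psi^n_H\cdot\Psi^m_H\subset \Psi^{n+m}_H\).

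Next we compute \(S'^2 - S'\). The operator \(S\) commutes with \(R\), since \(R\) is a polynomial in \(S\). Write \(T = 1-2S\); it also commutes with \(S\) and \(R\), and
\[T^2 = 1 - 4S + 4S^2 = 1 + 4R.\]
Then
\begin{align*}
S'^2 - S' &= (S+TR)^2 - (S+TR) \\
&= S^2 - S + 2STR + T^2R^2 - TR \\
&= R + (2S-1)TR + T^2R^2 \\
&= R - T^2R + T^2R^2 \\
&= R - (1+4R)R + (1+4R)R^2 \\
&= -3R^2 + 4R^3 .
\end{align*}
Since \(R^2\in\Psi^{-2k}_H(G_c)\) and \(R^3\in \Psi^{-3k}_H(G_c)\subset \Psi^{-2k}_H(G_c)\), we get \(S'^2-S'\in\Psi^{-2k}_H(G_c)\), as required.

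The argument is pure algebra in the filtered algebra \(\Psi^\bullet_H(G_c)\). The only real point is that every expression is a polynomial in the single operator \(S\), so commutativity is automatic and only the order bookkeeping matters. The one subtlety is the identity operator: it lies in \(\Psi^0_H(G_c)\), being the kernel \(\delta\) supported on the units. In the local groupoid setting one must also check that the supports remain composable, namely that \(S\) is supported in a \(3\)-composable neighborhood of the units so that \(S^3\) makes sense. Shrinking supports, as permitted in the remark after Corollary \ref{Corollary: Total symbol}, handles this: it changes \(S\) only by smoothing operators, which does not affect the order estimates above.
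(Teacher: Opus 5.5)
Your proof is correct and matches the paper's argument: the same correction \(S' = S - (2S-1)(S^2-S)\), the same identity \((2S-1)^2 = 1+4(S^2-S)\), and the same conclusion \(S'^2-S' = -3\Delta^2+4\Delta^3\). Your remarks on the identity operator and on supports in the local setting are harmless extras, not needed for the lemma as stated for a Lie groupoid \(G_c\).
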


\begin{proof}
    Let \(\Delta := S^2 - S \in \Psi^{-k}_H(G_c)\). At the symbolic level, \(S\) is a projector and the corresponding symmetry is (the principal symbol of) \(2S-1\). We have
    \begin{align*}
        (2S-1)^2 & = 4S^2 - 4 S +1   \\
                 & = 1 + 4 (S^2 - S) \\
                 & = 1+4\Delta.
    \end{align*}
    Let \(S' := S - (2S-1)\Delta\). Since \(\Delta \in \Psi^{-k}_H(G_c)\) and \(k > 0\) then \(S'\) and \(S\) have the same principal symbol. Moreover
    \[S'^2 = S^2 - 2S(2S-1)\Delta + (2S-1)^2\Delta^2,\]
    and so
    \begin{align*}
        S'^2-S' & = \Delta - (2S-1)^2\Delta + (2S-1)^2\Delta^2        \\
                & = \Delta - (1+4\Delta)\Delta + (1+4\Delta) \Delta^2 \\
                & = -3\Delta^2 + 4\Delta^3.
    \end{align*}
    Therefore \(S'^2-S' \in \Psi^{-2k}_H(G_c)\).
\end{proof}

\begin{prop}
    For any ground state projection \(s_0 \in \Sigma^0(\AAA)\), there exists a \(G_c\)-Szegö operator with principal symbol \(s_0\).
\end{prop}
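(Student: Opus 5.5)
We start from surjectivity of the principal symbol map. By Proposition \ref{Proposition: Pseudos exact sequence} with \(k=0\), there is \(S_0 \in \Psi^0_H(G_c)\) with \(\sigma^0_H(S_0) = s_0\). By the remark following the Remark on local operators, we may take \(S_0\) supported as close to the units as we like, which will matter only in the local groupoid setting. Since \(s_0\) is a ground state projection, \(s_0^2 = s_0\) in \(\Sigma^0(\AAA)\). By multiplicativity of the principal symbol, \(\sigma^0_H(S_0^2 - S_0) = s_0^2 - s_0 = 0\). The exact sequence then gives \(S_0^2 - S_0 \in \Psi^{-1}_H(G_c)\).

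Next we iterate the preceding Lemma. Starting from \(S_0\) with \(k=1\), it produces \(S_1\) with the same principal symbol and \(S_1^2 - S_1 \in \Psi^{-2}_H(G_c)\). Inductively we get \(S_j\) with \(S_j^2 - S_j \in \Psi^{-2^j}_H(G_c)\), all with principal symbol \(s_0\). Explicitly, \(S_{j+1} = S_j - (2S_j - 1)\Delta_j\) with \(\Delta_j = S_j^2 - S_j\). Hence the successive differences \(D_j := S_{j+1} - S_j = -(2S_j-1)\Delta_j\) lie in \(\Psi^{-2^j}_H(G_c)\).

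To pass to the limit we apply Theorem \ref{Theorem: Asymptotic completeness} to the sequence \(P_0 = S_0\), \(P_{j+1} = D_j\), whose orders \(0, -1, -2, -4, \dots\) decrease to \(-\infty\). (To match the hypothesis on orders exactly, one can pad with zero operators or use orders \(n_j\) with \(-2^j \le n_j\).) This gives \(S \sim S_0 + \sum_j D_j\). Then \(S - S_j \in \Psi^{-2^j}_H(G_c)\) for every \(j\), so \(\sigma^0_H(S) = s_0\). Moreover,
\[S^2 - S = (S_j^2 - S_j) + (S - S_j)S_j + S(S-S_j) - (S - S_j),\]
where every term lies in \(\Psi^{-2^j}_H(G_c)\). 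As \(j\) is arbitrary, \(S^2 - S \in \Psi^{-\infty}_H(G_c)\), so \(S\) is a \(G_c\)-Szegö operator. Alternatively, one could skip the limit and solve order by order for corrections \(T_k \in \Psi^{-k}_H\), killing the top symbol of \(S^2-S\) at each step. That would require solving \(s_0 t + t s_0 - t = -r\) symbolically, which is harder than using the Lemma, so we prefer the Lemma-based approach.

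The main obstacle is bookkeeping when \(G_c\) is only a local Lie groupoid. Each application of the Lemma multiplies supports: \(S_{j+1}\) involves \(S_j^3\), so its support is roughly \(U^{3}\) relative to that of \(S_j\). No single neighbourhood is \(n\)-composable for all \(n\). To handle this, we would exploit the non-uniqueness in the construction. At step \(j\) we replace \(D_j\) by an operator differing from it by a smoothing kernel, with support cut off to a fixed small neighbourhood \(U\) of the units: multiply the kernel by a cutoff equal to \(1\) near \(M\). This changes \(D_j\) only by \(\Psi^{-\infty}\) and does not affect orders. The asymptotic limit can likewise be chosen supported in \(U\). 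The final identity for \(S^2 - S\) then only uses products of at most three operators supported in \(U\), so it suffices to take \(U\) \(3\)-composable.
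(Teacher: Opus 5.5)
Your proof is correct and follows the paper's route: choose $S_0$ with symbol $s_0$, iterate the Lemma via $S_{j+1}=S_j-(2S_j-1)(S_j^2-S_j)$, and take an asymptotic limit $S\sim S_0+\sum_j (2S_j-1)(S_j-S_j^2)$. Your identity $S^2-S=(S_j^2-S_j)+(S-S_j)S_j+S(S-S_j)-(S-S_j)$ justifies the step $S^2-S\in\Psi^{-\infty}_H(G_c)$, which the paper only asserts ``by construction'', and the local-groupoid bookkeeping goes beyond what this proposition requires.
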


\begin{proof}
    Fix a ground state projection \(s_0\) and take any \(S_0 \in \Psi^0_H(G_c)\) with principal symbol \(s_0\). We apply the previous lemma inductively to define:
    \[S_{k+1} = S_k - (2S_k-1)(S_k^2 -S_k).\]
    Since \(s_0\) is a projector, we have \(S_0^2-S_0 \in \Psi^{-1}_H(G_c)\). The lemma shows that \(S_k^2 - S_k \in \Psi^{-2^k}_H(G_c)\). We now have
    \[S_k = S_0 + \sum_{j = 0}^{k-1} (2S_j-1)(S_j - S_j^2)\]
    with \((2S_j-1)(S_j - S_j^2)\in \Psi^{-2^j}_H(G_c)\). By Theorem \ref{Theorem: Asymptotic completeness} we may find \(S \in \Psi^0_H(G_c)\) with
    \[S \sim S_0 + \sum_{j \geq  0} (2S_j-1)(S_j - S_j^2).\]
    By construction we have \(S^2 - S \in \Psi^{-\infty}_H(G_c)\) and \(\sigma^0_H(S) = \sigma^0_H(S_0) = s_0\). Therefore \(S\) is a \(G_c\)-Szegö operator with principal symbol \(s_0\).
\end{proof}

We now fix a \(G_c\)-Szegö operator \(S\).

\begin{defn}
    A \(G_c\)-Toeplitz operator of order \(k \in \mathbb{Z}\) is an operator of the form \(SPS\) where \(P \in \Psi^k_H(G_c)\). We denote by \(\mathcal{T}^k(G_c)\) the set of such operators plus regularizing operators, i.e.
    \[\mathcal{T}^k(G_c) := S\Psi^k_H(G_c)S + \Psi^{-\infty}_H(G_c).\]
\end{defn}

Technically the set of these operators depends on the chosen \(G_c\)-Szegö operator. We choose to omit the mention of this choice in the name and notation as it does not affect the properties of these operators that we describe below.

\begin{rem}
    Adding the regularizing operators to the set of Toeplitz operators seems a bit artificial and is only there because we consider formal Szegö operators. This assumption comes into play in the proof of Theorem \ref{Theorem: Toeplitz exact sequence} below but does not change the symbolic properties of the operators. One can get rid of this artificiality if \(S\) is an actual projector or more generally if \(S^2 = S + SRS\) with \(R \in \Psi^{-\infty}_H(G_c)\), i.e. if \(S\) is a projector modulo smoothing Toeplitz operators. With these choices we have \(\T^{-\infty}(G_c) = \Psi^{-\infty}_H(G_c) \supsetneq S\Psi^{-\infty}_H(G_c)S\).
\end{rem}

Let \(k\in \mathbb{Z}\). Since \(S\) is of order \(0\), we have \(\T^k(G_c) \subset \Psi^k_H(G_c)\). Moreover the product of two \(G_c\)-Toeplitz operators is another \(G_c\)-Toeplitz operator (of order the sum of the two previous ones).

The key feature of Toeplitz operators are their symbolic properties which we know explain. Let \(SPS \in \T^k(G_c)\) be a \(G_c\)-Toeplitz operator, \(k\in \mathbb{Z}\). By multiplicativity of the principal symbol, we have \(\sigma^k(SPS) = \sigma^0(S)\sigma^k(P)\sigma^0(S)\). From the construction of \(S\), more specifically, its symbol, we obtain directly that \(\pi_-(\sigma^k_H(SPS)) = 0\). By continuity the equatorial symbol vanishes as well so we are left with the positive part of the symbol only. On the positive representation, \(\pi_+(\sigma^0_H(S))\) is a rank one projector on each fiber. Therefore, for \(x\in M\), \(\sigma^k_{H}(SPS)(x)\) is an operator on the range of \(\sigma^0_H(S)(x)\) which is one dimensional. Moreover, the collection of range of \(\sigma^0_H(S)(x), x \in M\) forms a trivial line bundle. Consequently, there exists a unique function \(f \in \CCC^{\infty}(M)\) such that
\[\forall x\in M, \pi_+(\sigma^k_H(SPS)(x)) = f(x) \pi_+(\sigma^0_H(S)(x)).\]
This function does not depend on \(P\) but on the operator \(SPS\), as it was constructed from its principal symbol.

\begin{defn}
    The function \(f\) above is called the principal symbol of the Toeplitz operator \(SPS\). We write it as \(\sigma^k(SPS)\).
\end{defn}

\begin{ex}
    Let \(f \in \CCC^{\infty}(M)\) and write \(M_f\) for operator of multiplication by \(f\). We have \(M_f \in \Psi^0_H(G_c)\), therefore we can construct the Toeplitz operator \(T_f:= SM_fS\in \T^0(G_c)\). Its principal symbol is the function \(f\).
\end{ex}

\begin{thm}\label{Theorem: Toeplitz exact sequence}
    For \(k\in \mathbb{Z}\), we have the exact sequence
    \[\xymatrix{0 \ar[r] & \T^{k-1}(G_c) \ar[r] & \T^k(G_c) \ar[r] & \CCC^{\infty}(M) \ar[r] & 0.}\]
\end{thm}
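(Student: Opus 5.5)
The plan is to check exactness at each of the three spots. The second map is the Toeplitz principal symbol \(\sigma^k\) just defined. The first map is the inclusion \(\T^{k-1}(G_c)\subset\T^k(G_c)\), which holds because \(\Psi^{k-1}_H(G_c)\subset\Psi^k_H(G_c)\).

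\emph{Well-definedness and injectivity.} Regularizing operators have vanishing principal symbol, so \(\sigma^k\) extends to \(\T^k(G_c)\) by zero on \(\Psi^{-\infty}_H(G_c)\). It is linear, since \(\pi_+(\sigma^k_H(T))=\sigma^k(T)\,\pi_+(\sigma^0_H(S))\) depends linearly on \(\sigma^k_H(T)\). Elements of \(\T^{k-1}(G_c)\) have vanishing order \(k\) Heisenberg symbol by Proposition \ref{Proposition: Pseudos exact sequence}, so the composite of the two maps is zero. Injectivity of the first map is trivial.

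\emph{Surjectivity.} Given \(f\in\CCC^{\infty}(M)\), I need an order \(k\) operator whose compression has Toeplitz symbol \(f\). Choose \(Q\in\Psi^k_H(G_c)\) whose principal symbol is homogeneous of degree \(k\) and acts as \(\mathrm{Id}\) in \(\pi_+\) on the ground state. For \(k\) even, a power of a sub-Laplacian-type operator \((\Delta_H+R^2)^{k/2}\) or its parametrix would do. In general I would take a symbol \(q_k\) with \(q_k\,s_0=s_0\,q_k=s_0\cdot|\eta|^{k/2}\); concretely \(q_k=|\eta|^{k/2}\chi\) on \(\eta>0\), which is homogeneous of degree \(k\) for the Heisenberg dilations. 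Such a \(Q\) exists by Proposition \ref{Proposition: Pseudos exact sequence}. Then \(SM_fQS\in\T^k(G_c)\) has Toeplitz symbol \(f\), using the trivialization of the ground-state line bundle. For \(k=0\), \(T_f\) from the example suffices.

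\emph{Exactness in the middle; the main step.} Let \(T=SPS+R\) with \(\sigma^k(T)=0\). Then \(\pi_+(\sigma^k_H(SPS))=0\). The \(\pi_-\) part and the characters of this symbol also vanish, because \(s_0\) kills them. Hence \(\sigma^k_H(SPS)=0\) as a symbol. This uses that a homogeneous symbol on the Heisenberg bundle is determined by its images under all irreducible representations, which I take from the Fourier picture, via Taylor's identification of \(\Sigma^k(\AAA)\) with functions on \(\AAA^*\setminus M\). Consequently \(SPS\in\Psi^{k-1}_H(G_c)\) by Proposition \ref{Proposition: Pseudos exact sequence}.

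The difficulty is that I must show \(T\) lies in \(S\Psi^{k-1}_H(G_c)S+\Psi^{-\infty}_H(G_c)\), not just in \(\Psi^{k-1}_H(G_c)\). Here I use the formal projector property: set \(P'=SPS\in\Psi^{k-1}_H(G_c)\). Then
\[SP'S=S^2PS^2=(S+D)P(S+D)=SPS+DPS+SPD+DPD,\]
with \(D=S^2-S\in\Psi^{-\infty}_H(G_c)\). The correction terms are regularizing, so \(SPS=SP'S \bmod \Psi^{-\infty}_H(G_c)\). Therefore \(T\in\T^{k-1}(G_c)\). This is exactly where including \(\Psi^{-\infty}_H(G_c)\) in \(\T^k(G_c)\) is needed.

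In the local groupoid case, I would choose the supports of \(S\) and \(P\) in a sufficiently composable neighbourhood \(U\) of the units, so that all products above are defined.
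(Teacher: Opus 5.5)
Your proof is correct and is essentially the paper's argument. Surjectivity comes from \(SM_fQS\) with \(Q\in\Psi^k_H(G_c)\) acting as the identity (up to the homogeneity factor) on the ground state, and exactness in the middle from \(SPS\in\Psi^{k-1}_H(G_c)\) together with \(S(SPS)S=S^2PS^2\equiv SPS \bmod \Psi^{-\infty}_H(G_c)\); you also usefully spell out why a vanishing Toeplitz symbol forces \(\sigma^k_H(SPS)=0\), which the paper leaves implicit.

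One caution on your aside: \((\Delta_H+R^2)^{k/2}\) is not a valid choice. The Reeb field \(R\) has Heisenberg order \(2\), so \(\Delta_H+R^2\) has order \(4\) and its principal symbol is not Rockland (it vanishes on the characters). Your symbolic construction is what does the job, for instance \(q_k=\eta^{k/2}s_0\) extended by \(0\) for \(\eta\le 0\), which is smooth because \(s_0\) vanishes to infinite order at \(\eta=0\).
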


\begin{proof}
    The sequence is obtained from Proposition \ref{Proposition: Pseudos exact sequence}. Surjectivity of the principal symbol is a consequence of the following. Let \(f \in \CCC^{\infty}(M)\) and \(P \in \Psi^k_H(G_c)\) with \(\pi_+(\sigma^k(P)) = \mathrm{Id}\). We get a Toeplitz operator \(SM_fPS \in \T^k(G_c)\) and its principal symbol is \(f\).

    The only remaining thing to show is exactness in the middle. Let \(SPS\in \T^k(G_c)\) with vanishing principal symbol. We have to show that we can write \(SPS = SQS\) with \(Q \in \Psi^{k-1}_H(G_c)\). We have \(\sigma^k_H(SPS) = 0\), therefore, \(SPS = P' \in \Psi^{k-1}_H(G_c)\). Now \(SP'S \in \T^{k-1}(G_c)\) and \(SP'S = S^2PS^2 \equiv SPS \mod \Psi^{-\infty}_H(G_c)\). Therefore \(SPS \in \T^{k-1}(G_c)\).
\end{proof}

Recall that elements of \(\Psi^{\bullet}_H(G_c)\) are particular cases of \(G_c\)-operators. As such, they can be decomposed as a smooth family of operators on the range fibers. For \(P \in \Psi^k_H(G_c)\), write \(P_x \in \Psi^k_H((G_c)^x)\) the operator on the range fiber. If \(T \in \T^k(G_c)\), then \(T_x\) is a Toeplitz operator (modulo smoothing operators) on the contact manifold \((G_c)^x\) as in \cite{Melrose2004}.

We now adapt the previous construction to local groupoids, \(G_c\) now denotes a local groupoid integrating \(\A_c\). Since \(S\) is only required to be a projection modulo smoothing operators, we can assume its support to be contained in any neighborhood of the units while still requiring it to be a \(G_c\)-Szegö operator. Let us fix a compatible complex structure \(J \colon \A_s \to \A_s\) and let \(s_0 \in \Sigma^0(\AAA)\) be the corresponding ground state projector.

\begin{prop}
    For every neighborhood of the units \(U \subset G_c\), there exists an operator \(S_U \in \Psi^0_H(U)\) which is a \(G_c\)-Szegö operator relative to the ground state projection \(s_0\), i.e. \(S_U^2 - S_U \in \Psi^{-\infty}_H(U^2)\) and \(\sigma^0(S_U) = s_0\).

    Moreover, if \(U \subset V\), and we have an operator \(S_V\) as above, we can construct an operator \(S_U\) as above and such that \(S_U - S_V \in \Psi^{-\infty}_H(V)\).
\end{prop}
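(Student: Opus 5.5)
The plan is to repeat the construction of the global $G_c$-Szegö operator inside a shrunken neighborhood of the units, using cut-offs to keep every correction supported in $U$.

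\emph{Step 1: reduce to a good neighborhood.} Without loss of generality I shrink $U$ to an open neighborhood $U_0\subset U$ of the units which is $n$-composable for $n=3$. I also take $U_0$ symmetric, so that the products $S_k^2$ and $(2S_k-1)(S_k^2-S_k)$ all make sense. Since $\Psi^\bullet_H(U_0)\subset\Psi^\bullet_H(U)$ and $U_0^2\subset U^2$, it suffices to prove the statement for $U_0$.

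\emph{Step 2: fix a cut-off.} Choose a smooth function $\chi$ on $G_c$ which equals $1$ on a smaller neighborhood $W$ of the units, with $W^3\subset U_0$, and which is supported in $U_0$, with proper support. Multiplying a kernel by $\chi$ does not change the principal symbol, since that only depends on the germ at the units. Moreover $(1-\chi)P$ is smoothing for any $P\in\Psi^\bullet_H$, because the kernels are conormal to the units and hence smooth away from them.

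\emph{Step 3: iterate.} By Proposition~\ref{Proposition: Pseudos exact sequence} and the remark on choosing sections with small support, pick $S_0\in\Psi^0_H(W)$ with $\sigma^0_H(S_0)=s_0$. Then $S_0^2-S_0$ is defined, lies in $\Psi^{-1}_H(W^2)$, and is of order $-1$ because $s_0$ is a projector. Define inductively
\[
S_{k+1}=S_k-\chi\cdot\bigl((2S_k-1)(S_k^2-S_k)\bigr).
\]
The composites are defined as long as $S_k$ is supported in $W$-like sets. To guarantee this, I would rather cut with $\chi_W$, equal to $1$ near the units and supported in $W$, at every step, so that each $S_k$ stays in $\Psi^0_H(W)$. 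The computation of the previous lemma then holds modulo smoothing errors coming from the cut-off, and these errors are harmless since they are of order $-\infty$. This yields $S_k\in\Psi^0_H(W)$ with $S_k^2-S_k\in\Psi^{-2^k}_H(W^2)$.

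\emph{Step 4: asymptotic sum.} Apply Theorem~\ref{Theorem: Asymptotic completeness} to the corrections, again multiplying the asymptotic limit by $\chi_W$. This gives $S_U\in\Psi^0_H(W)\subset\Psi^0_H(U)$ with $S_U^2-S_U\in\Psi^{-\infty}_H(W^2)\subset \Psi^{-\infty}_H(U^2)$ and $\sigma^0(S_U)=s_0$. The main obstacle is exactly this support bookkeeping. Asymptotic completeness is stated for groupoids, so I need it on the local groupoid with support control. I would obtain it from the remark that sections and asymptotic limits can be built from exponential cut-offs supported arbitrarily close to the units, since the Borel-type construction only uses the adiabatic local groupoid near $M\times\{0\}$.

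\emph{Step 5: the second assertion.} Given $S_V$ and $U\subset V$, set $S_U:=\chi S_V$, where $\chi$ equals $1$ near the units and is supported in $U$ (and in a composable neighborhood). Then $S_U-S_V=(\chi-1)S_V\in\Psi^{-\infty}_H(V)$, and $\sigma^0(S_U)=s_0$. Finally,
\[
S_U^2-S_U \equiv S_V^2-S_V\equiv 0
\]
modulo smoothing operators, where the products are defined on $U^2$. So $S_U$ is a Szegö operator supported in $U$.
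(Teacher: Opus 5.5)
Your proposal is correct and is essentially the paper's argument. The paper gives no separate proof beyond the remark that a Szegö operator need only be a projector modulo smoothing operators, so its support can be cut down to any neighborhood of the units. That cut-off mechanism is exactly what you use: you iterate the lemma and take an asymptotic sum inside a small neighborhood \(W\), and you set \(S_U=\chi S_V\) for the second assertion.

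One bookkeeping point to tighten: to run the lemma's computation modulo smoothing errors, you compare cut products such as \(S_k\cdot\chi_W(2S_k-1)(S_k^2-S_k)\) with their uncut versions, which are supported in \(W^4\) and \(W^6\). So you should take \(W\) sufficiently composable (e.g.\ \(6\)-composable) rather than only requiring \(W^3\subset U_0\). This is always possible by shrinking \(W\).
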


\begin{defn}
    Let \(U\subset G_c\) be a \(3\)-composable subset, \(S_U\) a \(G_c\)-Szegö operator supported in \(U\). We define Toeplitz operators by
    \[\T^k(U) := S_U \Psi^k_H(U)S_U + \Psi^{\-\infty}_H(U^3).\]
\end{defn}

\begin{rem}
    Despite the notation, elements in \(\T^k(U)\) have a support included in \(U^3\) and not in \(U\).
\end{rem}

The exact sequence of Proposition \ref{Proposition: Pseudos exact sequence} doesn't à priori hold with these extra support conditions. Notice indeed that in the proof, to produce a Toeplitz operator of order one less, we had to multiply the initial operator by the \(G_c\)-Szegö. This would only show that if \(SPS \in \T^k(U)\) has vanishing principal symbol, then \(S_UPS_U \in \T^{k-1}(U^3)\). Despite that slight complication, we still have a surjective map
\[\faktor{\T^k(U)}{\T^{k-1}(U)} \twoheadrightarrow \CCC^{\infty}(M).\]

Moreover, if \(U \subset V\) and we take \(S_U\) such that \(S_U - S_V \in \Psi^{-\infty}_H(V)\), then the following diagram commutes
\[\xymatrix{\faktor{\T^k(U)}{\T^{k-1}(U)} \ar[d] \ar[rd] \\
        \faktor{\T^k(V)}{\T^{k-1}(V)} \ar[r] & \CCC^{\infty}(M).}\]
The vertical arrow is induced by the inclusion \(\T^k(U) \to \T^k(V)\) but might fail to be injective in general.

\begin{thm}\label{Theorem: principal symbol local Toeplitz}
    Let \((U_n)_{n\in \mathbb{N}}\) be a decreasing sequence of open neighborhood of the units in \(G_c\) such that \(\bigcap_{n\in \mathbb{N}} U_n = M\). Consider Szegö projectors \(S_n \in \Psi^0_H(U_n)\) such that if \(n \leq m\) then \(S_n - S_m \in \Psi^{-\infty}_H(U_m)\). Then for all \(k \in \mathbb{Z}\), there is an isomorphism
    \[\varprojlim_{n\in\mathbb{N}}\faktor{\T^k(U_n)}{\T^{k-1}(U_n)} \cong \CCC^{\infty}(M).\]

\end{thm}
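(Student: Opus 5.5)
The plan is to show that the compatible surjections $\faktor{\T^k(U_n)}{\T^{k-1}(U_n)} \twoheadrightarrow \CCC^{\infty}(M)$ constructed above pass to an isomorphism on the inverse limit. There are three steps: check that the limit is well defined and the map is well defined, prove surjectivity, and prove injectivity. Injectivity is where the failure of the exact sequence of Theorem \ref{Theorem: Toeplitz exact sequence} under support constraints must be compensated, and I expect it to be the main obstacle.

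\emph{Step 1 (the inverse system).} For $n\le m$ we have $U_m\subset U_n$. The transition map $\faktor{\T^k(U_m)}{\T^{k-1}(U_m)}\to \faktor{\T^k(U_n)}{\T^{k-1}(U_n)}$ should be induced by the inclusion of operators. It is well defined because $S_m - S_n \in \Psi^{-\infty}_H$, so that $S_mPS_m \equiv S_nPS_n$ modulo smoothing operators, and smoothing operators are absorbed in $\T^{-\infty}(U_n)$. The commuting triangle displayed before the theorem gives compatible maps to $\CCC^{\infty}(M)$. Hence we obtain a map $\Phi\colon \varprojlim \to \CCC^{\infty}(M)$, defined by taking the Toeplitz principal symbol of any representative at any level.

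\emph{Step 2 (surjectivity).} Given $f\in\CCC^{\infty}(M)$, I fix an operator $P$ with $\pi_+(\sigma^k_H(P))=\mathrm{Id}$. Using the remark on exponential cut-offs, I choose $P_n\in\Psi^k_H(U_n)$ with that principal symbol, arranged so that $P_n-P_m\in\Psi^{-\infty}_H$. For instance, take a single $P$ and cut it off by functions equal to $1$ near $M$; the difference is smoothing since the kernel is smooth off the units. Then $T_n:=S_nM_fP_nS_n$ forms a compatible family, by the argument of Step 1, with Toeplitz symbol $f$ at every level.

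\emph{Step 3 (injectivity).} Take a compatible family $([T_n])_n$ with $\Phi=0$, where $T_n=S_nP_nS_n+R_n$. I must show that each $[T_n]$ vanishes in $\faktor{\T^k(U_n)}{\T^{k-1}(U_n)}$. The idea is to use the next level. Choose $m$ so large that $U_m$ is small enough that $U_m^3 \cdot U_m\cdot U_m^3$, and products with $S_m$ on both sides, lie inside $U_n$; this is possible since $\bigcap U_m=M$. By compatibility, $T_n\equiv T_m$ modulo $\T^{k-1}(U_n)$. Now $T_m$ has vanishing principal Heisenberg symbol, because its Toeplitz symbol vanishes and the $\pi_-$ and character parts vanish automatically. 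So $T_m\in\Psi^{k-1}_H(U_m^3)$ by Proposition \ref{Proposition: Pseudos exact sequence}, applied locally, since the symbol only depends on the behavior near the units. Then
\[T_m \equiv S_m^2 P_m S_m^2 \equiv S_m T_m S_m \mod \Psi^{-\infty}_H,\]
and $S_mT_mS_m$ is supported in $U_m^5\subset U_n$ with $T_m$ of order $k-1$. Finally I replace $S_m$ by $S_n$ up to smoothing errors. This exhibits $T_m$, and hence $T_n$, in $S_n\Psi^{k-1}_H(U_n)S_n+\Psi^{-\infty}_H(U_n^3)=\T^{k-1}(U_n)$. A small point to handle carefully is that $S_n T_m S_n$ must have middle factor supported in $U_n$: this is exactly what the shrinking to $U_m$ buys. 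The same shrinking argument also matches the smoothing remainders, so they land in $\Psi^{-\infty}_H(U_n^3)$. I therefore expect the bookkeeping of supports and of the $3$-composability hypotheses in this step to be the delicate part.
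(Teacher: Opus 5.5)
Your proposal is correct and follows essentially the same route as the paper: compatible Toeplitz symbol maps give the map to $\CCC^{\infty}(M)$, and a compatible family $S_n f\widetilde{D}_n S_n$ gives the inverse (your cut-off family $S_nM_fP_nS_n$ is the same idea). Injectivity is obtained, as in the paper, by passing to a deeper level $m$ where $T_m\in\Psi^{k-1}_H(U_m^3)$, $T_m\equiv S_mT_mS_m$ and $U_m^5\subset U_n$, and then replacing $S_m$ by $S_n$. Like the paper, you implicitly use that the $U_m$ eventually become composable and lie inside any given neighbourhood of the units; this does not follow from $\bigcap_m U_m=M$ alone and should be read as part of the hypothesis.
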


\begin{proof}
    The maps \(\faktor{\T^k(U_n)}{\T^{k-1}(U_n)} \to \CCC^{\infty}(M)\) commute with the inverse system \(\left(\faktor{\T^k(U_n)}{\T^{k-1}(U_n)}\right)_{n\in\mathbb{N}}\). We thus have a map
    \[\sigma\colon \varprojlim_{n\in\mathbb{N}}\faktor{\T^k(U_n)}{\T^{k-1}(U_n)} \to \CCC^{\infty}(M). \]
    The inverse of this map can be constructed as follows. Let \(\widetilde{D}_n \in \Psi^k_H(U_n)\) be an operator such that the principal symbol of the Toeplitz operator \(S_n\widetilde{D}_nS_n\) is \(x \mapsto 1\). We choose them so that if \(n \geq m\) then \(\widetilde{D}_{n} - \widetilde{D}_{m} \in \Psi^{k-1}_H(U_m)\). For \(f \in \CCC^{\infty}(M)\) define \(T_f^{k,(n)} := S_n f\widetilde{D}_n S_n\). We obtain a family of linear maps
    \begin{align*}
        T^{k,(n)} \colon \CCC^{\infty}(M) & \to \T^0(U_n)        \\
        f                                 & \mapsto T_f^{k,(n)}.
    \end{align*}
    These maps are not compatible with the inverse system \((\T^k(U_n))_{n\in \mathbb{N}}\). However we have for \(n \geq m\)
    \[T^{k,(n)}_f \equiv T^{k,(m)}_f \mod \T^{k-1}(U_m).\]
    Therefore the maps \(f \mapsto T^{k,(n)}_f \mod \T^{k-1}(U_n)\) are compatible with the inverse system \(\left(\faktor{\T^k(U_n)}{\T^{k-1}(U_n)}\right)_{n\in\mathbb{N}}\). This defines a map
    \begin{align*}
        \overline{T} \colon \CCC^{\infty}(M) & \longrightarrow \varprojlim_{n\in\mathbb{N}}\faktor{\T^k(U_n)}{\T^{k-1}(U_n)} \\
        f                                    & \longmapsto \left(T^{k,(n)}_f \mod \T^{k-1}(U_n) \right)_{n\in\mathbb{N}}.
    \end{align*}
    The principal symbol of \(T^{k,(n)}_f \) is \(f\), this gives \(\sigma\circ \overline{T} = \mathrm{Id}\). On the other hand, assume that \(\sigma((S_nP_nS_n)_{n\in \mathbb{N}}) = 0\). For \(n\) big enough, we may assume that \(U_n\) is \(5\)-composable so that
    \[S_nP_nS_n = S_n(S_nP_nS_n)S_n \mod \Psi^{-\infty}_H(U_n^5).\]
    But \(\sigma^k(S_nP_nS_n) = 0\) so \(S_nP_nS_n \in \Psi^{k-1}_H(U_n^3)\). Let \(m \in \mathbb{N}\), there exists \(n \geq m\) such that \(U_n^3 \subset U_m\). We then have \(S_nP_nS_n \in \T^{k-1}(U_m)\) and \((S_nP_nS_n)_{n\geq 0} = 0\) in \(\varprojlim_{n\in\mathbb{N}}\faktor{\T^k(U_n)}{\T^{k-1}(U_n)}\). This shows that \(\sigma\) is injective. It is thus an isomorphism with inverse \(\overline{T}\).
\end{proof}

\section{Construction of the star-product}\label{Section: Star product}

In this section we construct the star-product using Toeplitz operators. Before that, we begin by exploring the relationship between the Poisson brackets $\{\cdot,\cdot\}_c$ and $\{\cdot,\cdot\}$ of the manifold  $A_c^*$ as the dual of a Lie algebroid, and the manifold $M$ with its Poisson structure induced by $A_s$ respectively.
Remember that the latter is given by the commutative diagram:
\[\xymatrix{\A_s \ar[r]^{\rho} & TM \\
	\A_s^* \ar[u]^{\omega^{\#}} & T^*M \ar[l]^{\rho^*} \ar[u]_{\pi^{\#}}}\]
that is, 
\begin{equation*}
	\forall f,g\in \CCC^\infty(M),\quad \{f,g\} = \langle df , \pi^\# dg\rangle = \langle \rho_s^*df , (\rho_s^*dg)^\#\rangle = \omega( (\rho_s^*df)^\# , (\rho_s^*dg)^\#).
\end{equation*}
Therefore, denoting $X_a = (\rho_s^*da)^\#$ the Hamiltonian field of $a\in\CCC^\infty(M)$: 
\[ \{f,g\} = \omega(X_f,X_g) = -\rho_s(X_f)g = \rho_s(X_g)f.\]
Let us note $\widetilde{f}$ the pull-back to $A_c^*$ of $f\in\CCC^\infty(M)$ and $\ell_X : \Gamma (A_c^*)\to \CCC^\infty(M), \xi\mapsto \langle \xi, X\rangle$ for any $X\in \Gamma(A_c)$. 
Since the Poisson structure of $A_c^*$ is linear \cite[Section 10.3]{Mackenzie2005}, we have: 
\begin{align*}
\forall f,g\in\CCC^\infty(M),\quad  \{\ell_{(X_f,0)}, \ell_{(X_g,0)}\}_c & = \ell_{[(X_f,0),(X_g,0)]} = \ell_{([X_f,X_g],\omega(X_f,X_g))}\\
& =\ell_{([X_f,X_g],0))}+ \widetilde{\omega(X_f,X_g)}\ell_{(0,1)} \\
& = \ell_{([X_f,X_g],0))} + \eta \widetilde{\omega(X_f,X_g)}
\end{align*}
Let us consider \( \cS = \{ (x,0,\eta)\in A_c^*\ \mid \ \eta > 0 \} \). The previous computations show that:
\[
\forall f,g\in\CCC^\infty(M),\quad  \widetilde{\{f,g\}} = \frac{1}{\eta}\{\ell_{(X_f,0)}, \ell_{(X_g,0)}\}_c \text{ on } \mathcal{S}.
\]

The manifold \(\cS\) is a Dirac submanifold in the sense of \cite{Xu2003}. It is thus endowed with a Poisson bracket \(\{\cdot,\cdot\}_{\cS}\) of it's own, which resembles the one of \(\A_c^*\) but with a correction term. It is obtained similarly as the one on a symplectic submanifold of a symplectic manifold (see for instance \cite[Section 8.5]{MarsdenRatiu1998}). We relate it to the Poisson bracket of $M$. 

\begin{lem}\label{Lemma: Poisson brackets comparison}
For \(f,g\in \CCC^{\infty}(M)\), we have 
\[\{\widetilde{f},\widetilde{g}\}_{\cS} = \frac{1}{\eta}\widetilde{\{f,g\}}.\]
\end{lem}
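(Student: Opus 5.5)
The plan is to compute the bracket \(\{\cdot,\cdot\}_{\cS}\) explicitly with the Dirac formula, treating \(\cS\) as the zero set of second-class constraints in the open subset \(\{\eta>0\}\subset\A_c^*\). Over an open set \(W\subset M\), choose a local frame \(e_1,\dots,e_{2n}\) of \(\A_s\). The constraint functions are \(\varphi_i := \ell_{(e_i,0)}\), so that \(\cS\cap \A_c^*|_W=\{\varphi_1=\dots=\varphi_{2n}=0,\ \eta>0\}\). By linearity of the Poisson structure on \(\A_c^*\) and Equation \ref{Equation: Central extension Lie algebroid}, their brackets are
\[C_{ij}:=\{\varphi_i,\varphi_j\}_c=\ell_{([e_i,e_j],0)}+\eta\,\widetilde{\omega(e_i,e_j)}.\]
This is the same computation as the one preceding the lemma. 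On \(\cS\) the first term vanishes, so \(C_{ij}|_{\cS}=\eta\,\omega_{ij}\). This matrix is invertible because \(\omega\) is non-degenerate and \(\eta>0\), and its inverse is \(C^{ij}=\eta^{-1}\omega^{ij}\), where \((\omega^{ij})\) is the matrix of \(\pi_0\), i.e. of \(\omega^{\#}\). This is exactly the condition that makes \(\cS\) a Dirac submanifold, and it gives the Dirac bracket
\[\{F,G\}_{\cS}=\Big(\{F,G\}_c-\sum_{i,j}\{F,\varphi_i\}_c\,C^{ij}\,\{\varphi_j,G\}_c\Big)\Big|_{\cS}.\]
I will take this formula as the description of \(\{\cdot,\cdot\}_{\cS}\) from \cite{Xu2003} and \cite[Section 8.5]{MarsdenRatiu1998}. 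It is independent of the extensions of \(F\) and \(G\) off \(\cS\) and of the choice of frame, so the local computations glue.

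Next, apply the formula with \(F=\widetilde f\) and \(G=\widetilde g\), which are already defined on all of \(\A_c^*\). The standard properties of the linear Poisson structure on the dual of an algebroid \cite[Section 10.3]{Mackenzie2005} give two facts. First, \(\{\widetilde f,\widetilde g\}_c=0\). Second, \(\{\ell_X,\widetilde f\}_c=\widetilde{\rho(X)f}\), and since the central direction \((0,1)\) has zero anchor, \(\rho_c(e_i,0)=\rho_s(e_i)\). Substituting these,
\[\{\widetilde f,\widetilde g\}_{\cS}=-\sum_{i,j}\big(-\widetilde{\rho_s(e_i)f}\big)\,\eta^{-1}\omega^{ij}\,\widetilde{\rho_s(e_j)g}=\frac1\eta\,\widetilde{\pi_0(\rho_s^*df,\rho_s^*dg)}.\]
By the defining diagram, \(\pi_0(\rho_s^*df,\rho_s^*dg)=\langle df,\pi^{\#}dg\rangle=\{f,g\}\), which gives the claim.

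I expect the main obstacle to be bookkeeping of signs and conventions rather than any conceptual point. Three conventions must be matched consistently, because a mismatch flips the final sign: the sign in \(\{\ell_X,\widetilde f\}_c=\pm\rho(X)f\), the convention for \(\omega^{\#}\) as the inverse of contraction (which fixes whether \(C^{ij}\) equals \(+\eta^{-1}\omega^{ij}\) or \(-\eta^{-1}\omega^{ij}\)), and the identity \(\{f,g\}=\omega(X_f,X_g)=-\rho_s(X_f)g\) stated before the lemma. I will fix these by checking them against the displayed identity \(\widetilde{\{f,g\}}=\eta^{-1}\{\ell_{(X_f,0)},\ell_{(X_g,0)}\}_c\) on \(\cS\). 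As a cross-check, I will rewrite the correction term using the functions \(\ell_{(X_f,0)}\), which are combinations of the \(\varphi_i\) with coefficients \(\widetilde{(\omega^{\#}\rho_s^*df)_i}\), so the Dirac correction reproduces that identity directly.
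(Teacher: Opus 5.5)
Your argument follows the paper's proof step for step: the same constraints \(\ell_{(e_i,0)}\), the same \(C=\eta\,\Omega\) on \(\cS\), the same Dirac formula, and the same two facts about the linear Poisson structure. Both lead to \(\{\widetilde f,\widetilde g\}_{\cS}=\eta^{-1}\sum_{i,j}[\Omega^{-1}]_{ij}\,\widetilde{a_i}\,\widetilde{b_j}\), where \(a_i=\rho_s(e_i)f\), \(b_j=\rho_s(e_j)g\) and \(\Omega_{ij}=\omega(e_i,e_j)\).

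The one thing to change is how you plan to fix the signs, because that plan would give the wrong sign.

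\textbf{Why the calibration fails.} Whichever convention you use for \(\omega^{\#}\), you get \(X_f=\epsilon\,\Omega^{-1}a\) and \(X_g=\epsilon\,\Omega^{-1}b\) with the same \(\epsilon=\pm1\). Hence \(\omega(X_f,X_g)=a^{T}(\Omega^{T})^{-1}b=-\sum_{i,j}[\Omega^{-1}]_{ij}a_ib_j\). So the Dirac computation always gives \(\{\widetilde f,\widetilde g\}_{\cS}=-\eta^{-1}\widetilde{\omega(X_f,X_g)}=-\eta^{-2}\{\ell_{(X_f,0)},\ell_{(X_g,0)}\}_c\) on \(\cS\). Your cross-check therefore reproduces the displayed identity before the lemma only up to a sign. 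Calibrating against that identity, which amounts to imposing \(\{f,g\}=\omega(X_f,X_g)\), yields the lemma with the opposite sign. A quick test: on \(\R^2\) with \(\omega=dx\wedge dy\) and constraints \(\xi_1,\xi_2\), one finds \(\{\widetilde x,\widetilde y\}_{\cS}=-1/\eta\), while \(\omega(X_x,X_y)=1\).

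\textbf{What the lemma actually requires.} It holds exactly when \(\{f,g\}=\pi_0(\rho_s^*df,\rho_s^*dg)=\sum_{i,j}[\Omega^{-1}]_{ij}a_ib_j\). Your step \(\pi_0(\rho_s^*df,\rho_s^*dg)=\langle df,\pi^{\#}dg\rangle\) is valid precisely when contraction means insertion into the second slot. That is, \(\omega^{\#}\) is the inverse of \(X\mapsto\omega(\cdot,X)\) and \(\pi(\alpha,\beta)=\langle\alpha,\pi^{\#}\beta\rangle\). With first-slot contraction this step is off by a sign.

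The paper's proof silently uses the second-slot convention when it writes \((\rho^*dg)^{\#}_i=\sum_j[\Omega^{-1}]_{ij}\rho^*dg(X_j)\). In that convention, the identity \(\{f,g\}=\omega(X_f,X_g)\) stated before the lemma should carry a minus sign. So fix your convention by this second-slot reading of the defining diagram, not by the displayed identity.
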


\begin{proof}
Let $(X_i)_{1\leq i \leq 2m}$ be a local basis of sections of $A_s$ over an open subset $U\subset M$ and $\xi = (\xi_i)_{1\leq i \leq 2m}$ the dual coordinates in the fibers of $A_s^*$. We see the \(\xi_i\) as functions on \(\A_c^*\) through the inclution \(\A_s^*\subset \A_c^*\). We have : 
\[ \{ \xi_i,\xi_j\}_c = \ell_{[(X_i,0),(X_j,0)]} = \ell_{([X_i,X_j],0)}+ \eta \widetilde{\omega(X_i,X_j)}. \]
Set $C(x,\xi,\eta) =  (\{ \xi_i,\xi_j\}_c)$ and  $\Omega = (\widetilde{\omega(X_i,X_j)})$.  In restriction to $ \cS $, we have $C = \eta \Omega$. Thus $C$ is invertible in a (conic) neighborhood of  $\cS$ into $A_c^*$. 
Now for any functions $f,g\in \CCC^\infty(\cS)$, the function on $\pi^{-1}(U)\cap \cS$ defined by:
\[  \{f,g\}_\cS =  \{F,G\}_c - \sum_{i,j} [C^{-1}]_{i,j}\{F,\xi_i\}_c\{\xi_j,G\}_c \]
does not depend on the choice of $F$ and $G$ extending smoothly $f$ and $g$ to $A_c^*$, nor the choice of the local basis, and gives the Poisson structure of $\cS$. Let $f,g\in \CCC(M)$ and denote by $\widetilde{f},\widetilde{g}$ both their pull-back to $\cS$ and $A_c^*$. By the properties of the Linear Poisson structure of $A_c^*$ we have 
 \[\{\xi_i, \widetilde{f}\}_c =  \{\ell_{(X_i,0)}, \widetilde{f}\}_c = \widetilde{\rho(X_i)f}\]
 and 
 \[ \{\widetilde{f},\widetilde{g}\}_c = 0.\]
 Therefore
\begin{align*}
	\{\widetilde{f},\widetilde{g}\}_\cS & = \frac{1}{\eta}\sum_{i,j}[\Omega^{-1}]_{i,j}\widetilde{\rho(X_i)f}\widetilde{\rho(X_j)g} \\
	& =  \frac{1}{\eta}\sum_{i} \widetilde{\rho^*df(X_i)}(\sum_j [\Omega^{-1}]_{i,j}\widetilde{\rho^*dg(X_j)}) \\
	& =  \frac{1}{\eta}\sum_{i} \widetilde{(\rho^*df)^i (\rho^*dg)^\#_i }= \frac{1}{\eta}\widetilde{\langle \rho^*df, X_g\rangle}.\\
\end{align*}
That is:
\[  \{f,g\}_\cS= \frac{1}{\eta} \widetilde{\{f,g\}}.\]
\end{proof}

We now give the construction of the star product, starting with the integrable case. Let \(\A_s\) be a symplectic algebroid, \(\A_c\) its corresponding central extension. Let \(G_c \twoheadrightarrow M\) be a Lie groupoid integrating \(\A_c\). We fix a \(G_c\)-Szegö operator \(S \in \Psi^0_H(G_c)\).

\begin{thm}\label{Theorem: Symbol commutator}
    Let \(f,g \in \CCC^{\infty}(M)\), then \([T_f,T_g]\in \T^{-1}(G_c)\) and
    \[\sigma^{-1}([T_f,T_g]) = \frac{1}{\mathrm{i}}\{f,g\}.\]
    Here \(\{\cdot,\cdot\}\) denotes the Poisson bracket on \(M\) induced by the symplectic algebroid \(\A_s\).
\end{thm}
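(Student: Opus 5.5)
The plan is to reduce the commutator to a product of two commutators of the Szegö operator with multiplication operators, and then to compute the leading symbol of that product in the positive representations \(\pi_+\), where everything becomes creation/annihilation calculus on the Fock bundle \(\F^+(\A_s^{1,0})\).

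\textbf{Step 1: the commutator lands in order \(-1\).} First we check that \([T_f,T_g]\) has vanishing Toeplitz principal symbol. Toeplitz principal symbols are scalar multiples of the rank one projector \(\pi_+(s_0)\), so they commute. Hence \(\sigma^0([T_f,T_g])=fg-gf=0\), and Theorem \ref{Theorem: Toeplitz exact sequence} gives \([T_f,T_g]\in\T^{-1}(G_c)\).

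\textbf{Step 2: algebraic reduction.} Since \(\sigma^0_H(M_f)=f\) is central in the symbol algebra, we get \([S,M_f]\in\Psi^{-1}_H(G_c)\). Using \(S^2\equiv S\) modulo \(\Psi^{-\infty}_H(G_c)\), we have
\[
SM_f(1-S)\equiv [S,M_f](1-S), \qquad (1-S)M_gS\equiv (1-S)[M_g,S].
\]
Together with \(SM_fSM_gS=SM_{fg}S-SM_f(1-S)M_gS\), this gives, modulo \(\Psi^{-\infty}_H(G_c)\),
\[
[T_f,T_g]\equiv S\bigl([S,M_g](1-S)[M_f,S]-[S,M_f](1-S)[M_g,S]\bigr)S .
\]
This is a compressed product of two operators of Heisenberg order \(-1\). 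It therefore has Heisenberg order \(-2\), which is the order of \(\eta^{-1}\) for the Heisenberg dilations, since the Reeb direction has weight \(2\). So the symbol we want is a homogeneous function of degree \(-2\), restricted to the ray \(\cS\) in the \(\pi_+\) picture.

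\textbf{Order convention.} I would fix the convention explicitly. The Toeplitz "order \(-1\)" here is normalised so that \(\pi_+\) of the compressed Heisenberg symbol equals \(\eta^{-1}\) times a function on \(M\) times \(\pi_+(s_0)\). This is the factor \(1/\eta\) appearing in Lemma \ref{Lemma: Poisson brackets comparison}. The function on \(M\) obtained this way is what \(\sigma^{-1}\) denotes.

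\textbf{Step 3: symbol of \([S,M_g]\).} Next we identify \(\sigma^{-1}_H([S,M_g])\) via the adiabatic groupoid. Write \(M_g\) near the units, and Taylor expand \(g\circ s-g\circ r\) along range fibers to first order in the degree-one directions. This gives
\[
\sigma^{-1}_H([S,M_g])=\sum_i \widetilde{\rho(X_i)g}\;\, [s_0,\xi_i],
\]
where \(\xi_i\) are the degree-one coordinate functions on \(\AAA\), acting by multiplication on the convolution kernel \(\ev_{0*}\mathbb{S}\). Up to a factor \(\mathrm{i}\), this is the symbol of a degree-one derivation on the symbol side. In \(\pi_+\), the elements \(\pi_+(X_i)\) are combinations of creation and annihilation operators scaled by \(\sqrt{\lambda}\). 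Since \(s_0\) is the vacuum projector, \((1-s_0)[\,\cdot\,,s_0]\) picks out exactly the one-particle component.

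\textbf{Step 4 (main obstacle): the Fock computation.} The hardest step is evaluating
\[
\pi_+\Bigl(s_0\,\sigma^{-1}_H([S,M_g])\,(1-s_0)\,\sigma^{-1}_H([M_f,S])\,s_0\Bigr)
\]
and making sure no additional order \(-2\) terms survive. Such terms could come from the order \(-2\) part of \([S,M_g]\), or from second-order Taylor terms of \(f\). For these I would argue that each of them is compressed as \(s_0(\cdot)s_0\) with a single commutator, so it yields a symmetric expression in \(f,g\), and these cancel in the antisymmetrisation.

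For the surviving term, write \(\pi_+(X)\) in terms of \(a^\dagger\) and \(a\) using \(J\). The vacuum expectation of \(a\,a^\dagger\) gives the Hermitian metric \(h\). Antisymmetrising in \(f\leftrightarrow g\) should produce \(\frac{1}{\mathrm{i}\eta}\,\omega^{-1}(\rho^*df,\rho^*dg)\). By the computation preceding Lemma \ref{Lemma: Poisson brackets comparison}, this equals \(\frac{1}{\mathrm{i}\eta}\{f,g\}\); the symmetric part involving \(\omega(J\cdot,\cdot)\) cancels. As a consistency check, the result should match \(\frac{1}{\mathrm{i}}\{\widetilde f,\widetilde g\}_{\cS}\), as in the classical Boutet de Monvel–Guillemin picture.
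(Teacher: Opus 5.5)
Your argument is correct in outline, but it takes a different route from the paper.

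\textbf{How the routes differ.} The paper computes nothing on the Fock bundle. After the same order-drop observation as your Step 1, it imports from Boutet de Monvel--Guillemin and Epstein--Melrose the formula $\sigma^{-1}([T_f,T_g])=\frac{1}{\mathrm{i}}\{\widetilde f,\widetilde g\}_{\cS}$. Here $\cS=\{(x,0,\eta):\eta>0\}$ carries the Dirac-submanifold bracket induced from the linear Poisson structure of $\A_c^*$. Lemma \ref{Lemma: Poisson brackets comparison} then converts this bracket into $\frac{1}{\eta}\widetilde{\{f,g\}}$.

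You instead work directly on the osculating Heisenberg bundle. You reduce to $S\bigl([S,M_g](1-S)[M_f,S]-[S,M_f](1-S)[M_g,S]\bigr)S$, compute $\sigma^{-1}_H([S,M_g])$ from the adiabatic groupoid, and take a vacuum expectation. The bracket then appears directly as $\omega(X_f,X_g)=\{f,g\}$.

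Your route needs neither the Dirac structure on $\cS$ nor the import of the contact-manifold computation. The paper's route is much shorter and presents the answer as the classical ``commutator $=$ Poisson bracket on the symplectic cone''.

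\textbf{Order normalisation.} Your Step 2 shows that $[T_f,T_g]\in S\Psi^{-2}_H(G_c)S+\Psi^{-\infty}_H(G_c)$. Read literally with $\T^k=S\Psi^k_HS+\Psi^{-\infty}_H$, the order $-1$ symbol of the commutator would therefore vanish. Your explicit normalisation (Toeplitz order $-1$ $\leftrightarrow$ $\eta^{-1}$ $\leftrightarrow$ Heisenberg order $-2$) is exactly the one the paper's proof uses tacitly when it calls $\frac{1}{\mathrm{i}\eta}\widetilde{\{f,g\}}$ a symbol of order $-1$.

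\textbf{Step 4 has no obstacle.} Your reduced expression is a product of two operators of Heisenberg order $-1$. By multiplicativity, its order $-2$ symbol is exactly $s_0\,\sigma^{-1}_H([S,M_g])(1-s_0)\,\sigma^{-1}_H([M_f,S])\,s_0$ minus the swapped term. Subprincipal parts and second-order Taylor terms only enter at order $-3$. You can drop the symmetric-cancellation argument, which was not justified anyway.

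\textbf{The identity Step 3 needs.} What you should state precisely is the identity behind Step 3. View $\ell\in\A_{s,x}^*$ as a homomorphism $\AAA_x\to\R$, and use the conventions $\mathrm{d}\pi_\lambda(Z)=\mathrm{i}\lambda$ and $\pi_\lambda(k)=\int k(y)\pi_\lambda(y)\,\mathrm{d}y$. Then
\[
\pi_\lambda(\ell k)=-\mathrm{i}\,[\mathrm{d}\pi_\lambda(\lambda^{-1}\omega^{\#}\ell),\pi_\lambda(k)],
\]
because conjugating $\pi_\lambda$ by $\pi_\lambda(\exp(sv))$ multiplies it by the character $e^{\mathrm{i}s\lambda\omega(v,\cdot)}$. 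With $\ell=\rho^*\mathrm{d}g$ this gives $\pi_\lambda(\sigma^{-1}_H([S,M_g]))=\pm\mathrm{i}\lambda^{-1}[\mathrm{d}\pi_\lambda(X_g),\pi_\lambda(s_0)]$. This is where both the $\omega^{-1}$ and the factor $\eta^{-1}$ come from.

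\textbf{Simplification of the Fock computation.} The antisymmetrised vacuum expectation then reduces to a multiple of $\langle[\mathrm{d}\pi_\lambda(X_g),\mathrm{d}\pi_\lambda(X_f)]\rangle_0=\mathrm{i}\lambda\,\omega(X_g,X_f)$. This is a scalar by the Heisenberg relations, so you need no creation/annihilation bookkeeping and no metric $h$, and the $J$-dependence disappears automatically. Tracking the sign in this identity is also what fixes the overall sign, which you leave as ``should produce''.
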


\begin{proof}
    Let \(s_0 = \sigma(S)\) be the ground state projection corresponding to the \(G_c\)-Szegö operator. We have for \(x\in M\),
    \begin{align*}
        \sigma^0(T_fT_g)(x) & = s_0(x) f(x) s_0^2(x) g(x)s_0(x) \\
                            & = f(x)g(x) s_0^4(x)               \\
                            & = s_0(x) f(x)g(x) s_0(x)          \\
                            & = \sigma^0(T_{fg}).
    \end{align*}
    Therefore by symmetry, \(\sigma^0([T_f,T_g]) = 0\) and \([T_f,T_g]\) has order \(-1\). We see symbols of order \(k \in \mathbb{Z}\) as smooth functions on \(\cS\) that are homogeneous of order 		\(k\).

    Computations in \cite[Proposition 11.9]{BoutetdeMonvelGuillemin1981} and \cite[Equation 283]{EpsteinMelroseUnpublished} show that we have
    \[\sigma^{-1}([T_f,T_g]) = \frac{1}{\mathrm{i}}\{\widetilde{f},\widetilde{g}\}_{\cS}.\]
    
    Now using Lemma \ref{Lemma: Poisson brackets comparison}, we obtain
    \[\sigma^{-1}([T_f,T_g]) = \frac{1}{i\eta}\widetilde{\{f,g\}}.\]
    This is indeed a symbol of order \(-1\), corresponding to the function \(\frac{1}{\mathrm{i}}\{f,g\} \in \CCC^{\infty}(M)\).
\end{proof}

\begin{thm}\label{Theorem: Star-product integrable case}
    Let \(\A_s\) be a symplectic algebroid, \(\A_c\) its corresponding central extension. Let \(G_c \twoheadrightarrow M\) be a Lie groupoid integrating \(\A_c\) and \(S\in \Psi^0_H(G_c)\) a \(G_c\)-Szegö operator. There is an isomorphism
    \[\faktor{\T^0(G_c)}{\T^{-\infty}(G_c)} \cong \CCC^{\infty}(M)[[\hbar]].\]
    The product of Toeplitz operators induces a star-product on \(M\) for the Poisson structure induced by \(\A_s\).
\end{thm}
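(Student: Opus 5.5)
The plan mirrors the proof of Corollary \ref{Corollary: Total symbol}, with Theorem \ref{Theorem: Toeplitz exact sequence} in place of Proposition \ref{Proposition: Pseudos exact sequence}. First I fix, for each $k\geq 0$, an operator $D_k\in\Psi^{-k}_H(G_c)$ such that $SD_kS$ has Toeplitz principal symbol the constant function $1$. For $k=0$ I take $D_0=\mathrm{Id}$. For $k\geq 1$ I would take a power of an order $-1$ operator whose symbol is $\eta^{-1}$ on the positive representation; the most natural choice is a parametrix of the Reeb field $-\mathrm{i}R$. I then set $T^{(k)}_f := S f D_k S\in\T^{-k}(G_c)$, whose principal symbol is $f$ by the example of multiplication operators and multiplicativity of the symbol.

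Given $(f_k)\in\CCC^\infty(M)[[\hbar]]$, I use Theorem \ref{Theorem: Asymptotic completeness} to choose $P\sim\sum_k f_kD_k$ in $\Psi^0_H(G_c)$. Then $SPS\in\T^0(G_c)$ and $SPS-\sum_{k<N}T^{(k)}_{f_k}\in\T^{-N}(G_c)$. By the uniqueness part of that theorem, this defines a map $\CCC^\infty(M)[[\hbar]]\to\T^0/\T^{-\infty}$.

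For the inverse, I peel off symbols inductively. Given $T\in\T^0(G_c)$, set $f_0=\sigma^0(T)$; then $T-T^{(0)}_{f_0}\in\T^{-1}(G_c)$ by exactness in Theorem \ref{Theorem: Toeplitz exact sequence}. Set $f_1=\sigma^{-1}(T-T^{(0)}_{f_0})$, and continue in the same way. Two points need checking. First, $\T^{-\infty}:=\bigcap_k\T^{-k}$ equals $\Psi^{-\infty}_H(G_c)$. One inclusion is clear. For the other, an element of $\bigcap_k\T^{-k}$ lies in $\bigcap\Psi^{-k}_H=\Psi^{-\infty}_H$. Second, the two maps are mutually inverse. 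This follows because $T\sim\sum T^{(k)}_{f_k}$ holds by construction, and because an asymptotic sum of Toeplitz operators agrees with $S(\sum f_kD_k)S$ modulo $\Psi^{-\infty}_H$, since $S^2-S$ is smoothing. This yields the isomorphism, and I write $T_{(f_k)}$ for the class corresponding to $\sum f_k\hbar^k$.

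The star product is defined by transport: $F\ast G$ is the series corresponding to $T_F T_G$. It is associative because composition of operators is associative, and it is $\hbar$-bilinear provided multiplication by $\hbar$ corresponds to left multiplication by an operator $T_\hbar$ of order $-1$ that commutes with all Toeplitz operators modulo $\T^{-\infty}$. I expect this to be the main obstacle, since with an arbitrary choice of $D_k$ the identification is only a linear isomorphism. To fix it, I would choose $D_k=D_1^k$ with $D_1$ built from the Reeb field, so that $SD_1S$ commutes with $S$ modulo smoothing operators. I would then argue symbolically that $[SD_1S,T]$ has order one less than expected, and improve it order by order: a Toeplitz operator that commutes with all $T_f$ to every order can be corrected into an exact central element. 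If this cannot be achieved, I would instead define $\ast$ only on $\CCC^\infty(M)$ and extend it $\hbar$-bilinearly, checking associativity from that of composition through the expansion $T_fT_g\sim\sum_k T^{(k)}_{C_k(f,g)}$.

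Finally, the star-product axioms. The computation at the start of the proof of Theorem \ref{Theorem: Symbol commutator} gives $\sigma^0(T_fT_g)=fg$, so $f\ast g=fg+O(\hbar)$. Theorem \ref{Theorem: Symbol commutator} gives $\sigma^{-1}([T_f,T_g])=\frac{1}{\mathrm{i}}\{f,g\}$, so $f\ast g-g\ast f=\frac{\hbar}{\mathrm{i}}\{f,g\}+O(\hbar^2)$. Rescaling $\hbar=\mathrm{i}t$, or equivalently normalizing $D_1$ by a factor $\mathrm{i}$, gives exactly the required commutator condition.
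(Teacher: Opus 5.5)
Your proposal is correct and follows the paper's proof. The isomorphism comes from peeling off Toeplitz principal symbols with Theorem~\ref{Theorem: Toeplitz exact sequence} and inverting with Theorem~\ref{Theorem: Asymptotic completeness}; the product is then transported, and the two axioms follow from $\sigma^0(T_fT_g)=fg$ and Theorem~\ref{Theorem: Symbol commutator}. Your $SfD_kS$ replaces the paper's $D^{-k}T_{f_k}$. Forming the asymptotic sum in $\Psi_H$ before compressing by $S$, and checking that $\bigcap_k\T^{-k}=\Psi^{-\infty}_H(G_c)$, make explicit two points the paper passes over.

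The paper's proof does not address $\hbar$-bilinearity at all. If you want it, note two things:
\begin{enumerate}
\item Your fallback needs the same centrality to verify associativity of the bilinear extension, so it is not an independent route.
\item No order-by-order correction is needed. The section $(0,1)$ of $\A_c$ is central with zero anchor, so $R$ commutes with every element of $\Psi_H(G_c)$ modulo $\Psi^{-\infty}_H(G_c)$. Hence $S(-\mathrm{i}R)S$ and its Toeplitz parametrix commute with all Toeplitz operators modulo smoothing operators.
\end{enumerate}
Keep in mind that $-\mathrm{i}R$ is elliptic only after compression by $S$. It also has order $2$ in the grading of Section~\ref{Section: Groupoid Heisenberg calculus}, so its order must be matched with the convention used in Theorem~\ref{Theorem: Symbol commutator}.
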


\begin{proof}
    Let \(D\in \T^1(G_c)\) with principal Toeplitz symbol \(x\mapsto 1\). We will write \(D^{-1}\in \T^{-1}(G_c)\) for a parametrix of \(D\), that is \(DD^{-1} - \mathrm{Id}, D^{-1}D -\mathrm{Id} \in \T^{-\infty}(G_c)\).

    Let \(P \in \T^0(G_c)\) and denote by \(f_0\) its principal Toeplitz symbol. We have \(\sigma^0(P - T_{f_0}) = 0\) so \(R_1 := T_{f_0} - P \in \T^{-1}(G_c)\). Let \(f_1 = \sigma^0(DR_1)\), we have
    \[P = T_{f_0} + D^{-1} T_{f_1} + R_2, \quad R_2 \in \T^{-2}(G_c).\]
    We may proceed inductively to get functions \(f_k, k\geq 0\) and operators \(R_k \in \T^{-k}(G_c), k \geq 1\) so that
    \[\forall n \in \mathbb{N}, P = \sum_{k = 0}^{n-1}D^{-k} T_{f_k} + R_n.\]
    These functions do not depend on the class of \(P\) modulo \(\T^{-\infty}(G_c)\). We have thus defined a linear map:
    \[\faktor{\T^{0}(G_c)}{\T^{-\infty}(G_c)} \to \CCC^{\infty}(M)[[\hbar]].\]
    The inverse of this map is defined as follows. Consider a sequence of functions \((f_k)_{k\geq 0}\in \CCC^{\infty}(M)^{\mathbb{N}}\). The sequence of operators \(D^{-k}T_{f_k}, k \geq 0 \) satisfies the conditions of Theorem \ref{Theorem: Asymptotic completeness} so we may find an operator \(P \in \T^0(G_c)\) with
    \[\forall n \in \mathbb{N}, P - \sum_{k = 0}^{n-1}D^{-k} T_{f_k} \in \T^{-n}(G_c).\]
    This operator is unique modulo \(\T^{-\infty}(G_c)\) so we get a linear map
    \[\CCC^{\infty}(M)[[\hbar]] \to \faktor{\T^{0}(G_c)}{\T^{-\infty}(G_c)}.\]
    This map is the inverse to the previous one.

    Now the product of Toeplitz operators induces a product \(\ast\) on the space of formal power series \(\CCC^{\infty}(M)[[\hbar]]\). This product is associative. If \(f,g \in \CCC^{\infty}(M)\) are seen as elements of order \(0\), we have
    \begin{align*}
        f\ast g & = T_fT_g\mod \T^{-\infty}(G_c)  \\
                & = T_{fg} \mod \T^{-\infty}(G_c) \\
                & = fg + O(\hbar).
    \end{align*}
    Similarly, using Theorem \ref{Theorem: Symbol commutator} we get
    \begin{align*}
        f\ast g - g \ast f & = [T_f,T_g] \mod \T^{-\infty}(G_c)          \\
                           & = -\mathrm{i} D^{-1} T_{\{f,g\}} \mod \T^{-2}(G_c) \\
                           & = -\mathfrak{i}\hbar \{f,g\} + O(\hbar^2).
    \end{align*}
    Therefore we obtain a product on \(\CCC^{\infty}(M)[[t]]\) which is a star-product for \((M,\pi)\) by taking \(t = -i\hbar\). 
\end{proof}

We now construct the star-product in the general case. The idea is similar to the integrable case but a projective limit argument is needed to overcome the composition problem, in the spirit of Theorem \ref{Theorem: principal symbol local Toeplitz}.

We now take \(G_c\) to be a local Lie groupoid integrating \(\A_c\). Let \((U_n)_{n\in \mathbb{N}}\) be a decreasing sequence of open neighborhood of the units in \(G_c\) such that \(\bigcap_{n\in \mathbb{N}} U_n = M\). Consider Szegö projectors \(S_n \in \Psi^0_H(U_n)\) such that if \(n \leq m\) then \(S_n - S_m \in \Psi^{-\infty}_H(U_m)\).
We also consider operators \(D_n \in \T^1(U_n)\) and parametrices \(D_n^{-1} \in \T^{-1}(U_n)\). This means that \(D_nD_n^{-1}-\mathrm{Id},D_n^{-1}D_n-\mathrm{Id} \in \Psi^{-\infty}(U_n^6)\) (this makes sense for \(n\) big enough). As for the operators \(S_n\), we choose the operators \(D_n\) and \(D_n^{-1}\) so that if \(m > n\) then \(D_n - D_m, D_n^{-1}-D_m^{-1} \in \Psi^{-\infty}_H(U_n^3)\).

\begin{thm}
    There is an isomorphism
    \[\varprojlim_{n\to \infty}\faktor{\T^0(U_n)}{\T^{-\infty}(U_n)} \cong \CCC^{\infty}(M)[[\hbar]].\]
    The product of Toeplitz operators induces a star-product on \(M\) for the Poisson structure induced by \(\A_s\).
\end{thm}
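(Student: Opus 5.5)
We combine the proof of Theorem \ref{Theorem: Star-product integrable case} with the inverse-limit argument of Theorem \ref{Theorem: principal symbol local Toeplitz}. First we make the filtration concrete. For each \(n\) large enough that \(U_n\) is sufficiently composable (say \(6\)-composable), and for \(f\in\CCC^{\infty}(M)\), set \(T^{(n)}_f := S_n M_f S_n \in \T^0(U_n)\). Given \(P_n \in \T^0(U_n)\), we run the inductive expansion
\[P_n = \sum_{k=0}^{N-1} (D_n^{-1})^k T^{(n)}_{f_k} + R_N .\]
At each step, \(f_k\) is the Toeplitz principal symbol of \(D_n^k R_k\). The only change from the integrable case is that supports grow. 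Each step multiplies by \(S_n\), \(D_n\) and \(D_n^{-1}\), so \(R_N\) lies in \(\T^{-N}(U_n^{c_N})\) for some constant \(c_N\) growing linearly in \(N\), rather than in \(\T^{-N}(U_n)\). This is the reason to pass to the inverse system.

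Next we show the coefficients are well defined and compatible. For fixed \(k\), the function \(f_k\) depends only on symbols, that is, on the germ of the operators near the units. The hypotheses \(S_n-S_m\in\Psi^{-\infty}_H\) and \(D_n^{\pm1}-D_m^{\pm1}\in\Psi^{-\infty}_H\) imply that \(f_k\) computed at level \(n\) agrees with \(f_k\) computed at level \(m\) after restricting along the inverse system. Likewise, two representatives differing by an element of \(\T^{-\infty}(U_n)\) give the same \(f_k\). We then argue exactly as at the end of the proof of Theorem \ref{Theorem: principal symbol local Toeplitz}: for each \(m\) and each \(N\), there is \(n\ge m\) with \(U_n^{c_N}\subset U_m\). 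This gives a well-defined linear map
\[\Phi\colon \varprojlim_n \faktor{\T^0(U_n)}{\T^{-\infty}(U_n)} \to \CCC^{\infty}(M)[[\hbar]] .\]

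To build the inverse, fix \((f_k)\). For each \(n\), apply Theorem \ref{Theorem: Asymptotic completeness} to the sequence \((D_n^{-1})^k T^{(n)}_{f_k}\). We want the resulting asymptotic sum to stay inside a fixed power of \(U_n\), which requires care. We choose all the pieces supported in \(U_n^3\). We use the remark that the sections and cut-offs in \cite{vanErpYuncken2019} can be taken with support arbitrarily close to the units, and that modulo \(\Psi^{-k}\) the product \((D_n^{-1})^k T_{f_k}\) can be replaced by an operator with the same symbol supported in \(U_n\), sandwiched by \(S_n\). The results for different \(n\) agree modulo \(\Psi^{-\infty}\) on the common support, by uniqueness of asymptotic limits. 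Injectivity of \(\Phi\) follows as in Theorem \ref{Theorem: principal symbol local Toeplitz}. If all \(f_k\) vanish, then for each \(m\) and \(N\) the element lies in \(\T^{-N}(U_m)\) for \(n\) large. Compatibility in the inverse limit then gives that it lies in \(\T^{-\infty}\).

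The star-product comes from composition. If \(U_{n'}\) is chosen with \(U_{n'}^3\cdot U_{n'}^3\subset U_n\), then products of Toeplitz operators at level \(n'\) land at level \(n\). Hence the inverse limit is an algebra, and associativity holds wherever triple products are defined, which is cofinally. The two axioms are checked on symbols as before. The identity \(f\ast g=fg+O(\hbar)\) follows from multiplicativity of the principal symbol. The commutator condition follows from Theorem \ref{Theorem: Symbol commutator}, whose proof is purely symbolic and hence local near the units. The main obstacle is the support bookkeeping: every \(\equiv\) above must hold in some \(\Psi^{-N}_H(U_n^{c})\) with \(c\) independent of \(n\), and the asymptotic summation must not enlarge supports. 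I would handle this by fixing the supports of all auxiliary operators in \(U_n\) once and for all, and tracking the finite exponent \(c_N\) at each truncation order.
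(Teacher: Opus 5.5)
The overall strategy is sound: expand in model Toeplitz operators, read off the coefficients level by level, and pass to the inverse limit to absorb the growth of supports. Your arguments for the well-definedness and injectivity of \(\Phi\), for the algebra structure on the inverse limit, and for the two star-product axioms are fine. The gap is in the construction of the inverse.

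At a fixed level, \((D_n^{-1})^kT^{(n)}_{f_k}\) is only defined for finitely many \(k\). You therefore replace it by an operator supported in \(U_n\), sandwiched by \(S_n\), with ``the same symbol'' modulo lower order. Your ``modulo \(\Psi^{-k}\)'' must mean \(\Psi^{-k-1}_H\), since the product already lies in \(\Psi^{-k}_H\). Agreement of principal symbols does not give either of the two things you need.

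\begin{itemize}
\item \emph{Compatibility across levels fails.} If the replacements \(Q^{(n)}_k\) are chosen independently at each level, then \(Q^{(n)}_k-Q^{(n')}_k\) lies only in \(\Psi^{-k-1}_H\). Uniqueness of asymptotic limits then only says that the sums at levels \(n\) and \(n'\) agree modulo some finite order, not modulo \(\Psi^{-\infty}_H\). So the family you build need not define an element of \(\varprojlim_n \T^0(U_n)/\T^{-\infty}(U_n)\), contrary to your claim.
\item \emph{You have not built an inverse of \(\Phi\).} Even granting compatibility, \(\Phi\) of the resulting element is not \((f_k)_k\). The discrepancies \(Q_k-(D^{-1})^kT_{f_k}\in\Psi^{-k-1}_H\) feed into the coefficients \(f_{k+1},f_{k+2},\dots\) that \(\Phi\) extracts. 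You only proved injectivity, so surjectivity is not established. It could be rescued by noting that the induced change of coefficients is unitriangular, but the first problem remains.
\end{itemize}

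Both points are repaired by making the replacement exact modulo \(\Psi^{-\infty}_H\). For each \(k\), pick \(m\ge n\) deep enough that \((D_m^{-1})^kT^{(m)}_{f_k}\) is defined and supported in \(U_n\); equivalently, cut off its Schwartz kernel near the units, which only changes it by a smooth kernel. Since \(S_m-S_n\) and \(D_m^{\pm1}-D_n^{\pm1}\) are smoothing, these pieces are compatible across levels modulo \(\Psi^{-\infty}_H\). They also have exactly the expansion that \(\Phi\) reads off.

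The paper sidesteps the issue, and with it your \(c_N\) bookkeeping, by not using powers of \(D_n^{-1}\) at all. For every \(k\) it chooses \(\widetilde D^{-k}_n\in\Psi^{-k}_H(U_n)\) with Toeplitz symbol \(1\), compatibly in \(n\) modulo \(\Psi^{-\infty}_H\). It then uses the same pieces \(S_nf\widetilde D^{-k}_nS_n\), all supported in \(U_n^3\), both to extract the coefficients and to form the asymptotic sum. The two maps are then inverse to each other by construction.
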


\begin{proof}
    Let \(\widetilde{D}^k_{n} \in \Psi^k_H(U_n)\) be an operator such that the principal symbol of the Toeplitz operator \(S_n\widetilde{D}^k_nS_n\) is \(x \mapsto 1\). We choose them so that if \(n \geq m\) then \(\widetilde{D}^k_{n} - \widetilde{D}^k_{m} \in \Psi^{-\infty}_H(U_m)\). For \(f \in \CCC^{\infty}(M)\) define \(T_f^{k,(n)} := S_n f\widetilde{D}^k_n S_n\). Consider an element \(\sum_{k\geq 0} f_k \hbar^k\in \CCC^{\infty}(M)[[\hbar]]\). The sequence of operators \((T_{f_k}^{-k,(n)})_{k\geq 0}\) satisfies the conditions of Theorem \ref{Theorem: Asymptotic completeness} so we may find an operator \(P_n \in \Psi^0_H(U_n^3)\) such that
    \[P_n \sim \sum_{k\geq 0} T_{f_k}^{-k,(n)}.\]
    Replacing \(P_n\) by \(S_nP_nS_n\) (this makes sense for \(n\) big enough) we still have
    \[S_nP_nS_n \sim \sum_{k\geq 0} T_{f_k}^{-k,(n)}\]
    and now \(S_mP_mS_m\in \T^0(U_n)\) for \(m\geq n\) such that \(U_m^3 \subset U_n\). The operator \(P_n\) is uniquely defined modulo regularising operators. This defines a linear map
    \[\overline{T}\colon \CCC^{\infty}(M)[[\hbar]] \to \varprojlim_{n\to \infty}\faktor{\T^0(U_n)}{\T^{-\infty}(U_n)}.\]
    The inverse of that map is obtained as follows. Consider an element \(P_n \in \T^0(U_n)\). We have \(\sigma^0(P_n) =: f_0\) for some smooth function \(f_0\in \CCC^{\infty}(M)\). The operator \(P_n - T_f^{0,(n)}\) belongs to \(\Psi^{-1}_H(U_n)\). We can also see it as en element in \(S_n\Psi^{-1}_H(U_n^3)S_n + \Psi^{-\infty}_H(U_n^5)\) if \(n\) is big enough but we may compute its Toeplitz symbol regardless, by looking at \(\sigma^{-1}_H(P_n - T_f^{0,(n)})\). We obtain another smooth function \(f_1\). We define iteratively:
    \[f_{k+1} = \sigma^{-k-1}\left(P_n - \sum_{j = 0}^k T_f^{-j,(n)}\right).\]
    If \(m\geq n\) and \(P_m - P_n \in \Psi^{-\infty}(U_n)\) then the functions \(f_k, k\geq 0\) defined from \(P_n\) and \(P_m\) are the same.
    This defines a linear map:
    \[\overline{\sigma} \colon \varprojlim_{n\to \infty}\faktor{\T^0(U_n)}{\T^{-\infty}(U_n)} \to \CCC^{\infty}(M)[[\hbar]].\]
    By construction we have
    \[P_n \sim \sum_{k\geq 0} T_{f_k}^{-k,(n)}.\]
    This relation shows that \(\overline{\sigma}\circ \overline{T}\) and that \(\overline{\sigma}\) is injective. Indeed, two operators having the same total symbol (the sequence of functions \(f_k, k \geq 0\)) would differ by a regularizing operator. The maps \(\overline{\sigma}\) and \(\overline{T}\) are then inverse to one another. The product of Toeplitz operators then induces an algebra structure on \(\CCC^{\infty}(M)[[\hbar]]\), it is a star-product by the same computations as in \ref{Theorem: Star-product integrable case}.
\end{proof}

\begin{rem}
    The projective limit argument can be replaced as follows. Consider a \(G_c\)-Szegö operator \(S_0\). Consider then \(\mathscr{S} = \{S \in \Psi^0_H(G_c), S-S_0 \in \Psi^{-\infty}_H(G_c)\}\). All the elements of \(\mathscr{S}\) are \(G_c\)-Szegö operators with the same principal symbol as \(S_0\). We redefine the set of Toeplitz operators to be
    \[\widetilde{\T}^k(G_c) = \{SPS, S\in \mathscr{S}, P\in \Psi^k_H(G_c)\}+\Psi^{-\infty}_H(G_c).\]
    Elements in \(\faktor{\widetilde{\T}^0(G_c)}{\widetilde{\T}^{-\infty}(G_c)}\) admit representatives of the form \(SPS\) with \(S\) and \(P\) having support in a neighborhood of the units as small as possible (we at least need it for the product to make sense). The same arguments as before show that
    \[\faktor{\widetilde{\T}^0(G_c)}{\widetilde{\T}^{-\infty}(G_c)} \cong \CCC^{\infty}(M)[[\hbar]].\]

    By taking representatives with appropriate support, we can always compose elements of the left hand side. The product induced on the formal power series is again a star-product by proposition \ref{Theorem: Symbol commutator}.
\end{rem}

\bibliographystyle{plain}
\bibliography{Biblio.bib}

\end{document}